\newtheorem{thm}{Theorem}[section]
\newtheorem{lem}[thm]{Lemma}
\newtheorem{prop}[thm]{Proposition}
\theoremstyle{definition}
\newtheorem{defn}[thm]{Definition}
\DeclareMathOperator{\card}{card}
\DeclareMathOperator{\interior}{int}
\DeclareMathOperator{\Lip}{Lip}
\DeclareMathOperator{\supp}{supp}
\newcommand{\ce}[1]{\mathcal{E}(#1)}
\newcommand{\free}[1]{\ensuremath{\mathcal{F}({#1})}}
\newcommand{\inter}[1]{\interior(#1)}
\newcommand{\ip}[2]{\ensuremath{\left\langle{#1}\,|\,{#2}\right\rangle}}
\newcommand{\lint}[4]{\ensuremath{\int_{#1}^{#2}{#3}\:\mathrm{d}{#4}}}
\newcommand{\Lipo}[1]{\Lip_0(#1)}
\newcommand{\N}{\mathbb{N}}
\newcommand{\nm}[1]{\left\vert\kern-0.25ex\left\vert #1 \right\vert\kern-0.25ex\right\vert}
\newcommand{\nmt}[1]{{\left\vert\kern-0.25ex\left\vert\kern-0.25ex\left\vert #1 \right\vert\kern-0.25ex\right\vert\kern-0.25ex\right\vert}}
\newcommand{\R}{\mathbb{R}}
\newcommand{\ts}{\textstyle}
\numberwithin{equation}{section}
\begin{document}
	\title{Lipschitz-free spaces over manifolds and the Metric Approximation Property}
	\author{Richard J.~Smith}
	\address{School of Mathematics and Statistics, University College Dublin, Belfield, Dublin 4, Ireland}
	\email{richard.smith@maths.ucd.ie}
	\author{Filip Talimdjioski}
	\address{School of Mathematics and Statistics, University College Dublin, Belfield, Dublin 4, Ireland}
	\email{filip.talimdjioski@ucdconnect.ie}
	
	\begin{abstract}
		Let $\nm\cdot$ be a norm on $\R^N$ and let $M$ be a closed $C^1$-submanifold of $\R^N$. Consider the pointed metric space $(M,d)$, where $d$ is the metric given by $d(x,y)=\nm{x-y}$, $x,y\in M$. Then the Lipschitz-free space $\free{M}$ has the Metric Approximation Property.
	\end{abstract}
	
	\keywords{Lipschitz-free space, metric approximation property, Euclidean submanifold}
	\subjclass[2010]{Primary 46B20, 46B28}
	
	\date{\today}
	\maketitle
	
	\section{Introduction}\label{sect_intro}
	
	For a metric space $(M,d)$ and a point $x_0\in M$ let $\Lipo{M,x_0}$ be the Banach space of all real-valued Lipschitz functions $f$ on $M$ vanishing at $x_0$, equipped with the norm $$\nm{f} = \sup \left\{ \frac{|f(x)-f(y)|}{d(x,y)} : x,y\in M, x\not=y \right\}.$$ For every $x\in M$ let $\delta_x \in \Lipo{M,x_0}^*$ be the bounded functional defined by $\delta_x(f) = f(x)$ for all $f$. The Lipschitz-free space $\free{M}$ over $M$ is defined to be the closed linear span of $\{\delta_x : x\in M\} \subseteq \Lipo{M,x_0}^*$. The map $\delta\colon M\to \free{M}, \delta(x) = \delta_x$ is an injective isometry of $M$ into $\free{M}$. The free space $\free{M}$ is an isometric predual to $\Lipo{M,x_0}$ and it satisfies the fundamental property that any Lipschitz map $F$ from $M$ to another pointed metric space $N$, preserving the base point, uniquely extends, via the map $\delta$, to a bounded linear map between $\free{M}$ and $\free{N}$ whose norm equals the Lipschitz constant of $F$. This can be used as a tool for linearization of Lipschitz maps and therefore transferring nonlinear problems to a linear setting. The book \cite{weaver} by Weaver is devoted to Lipschitz spaces and investigates their Banach space, lattice and algebraic structure, among other topics.
	
	One of the directions of research has been approximation properties of Lipschitz-free spaces. In their seminal paper \cite{godefroykalton}, Godefroy and Kalton showed that if there exists a Lipschitz bijection between $X$ and $Y$ with a Lipschitz inverse, and $X$ has the bounded approximation property (BAP), then $Y$ has the BAP. To this end, they prove that $X$ has the $\lambda$-bounded approximation property ($\lambda$-BAP) if and only if $\free{X}$ has the $\lambda$-BAP, and that free spaces over finite-dimensional Banach spaces have the metric approximation property (MAP). Many results have since appeared that prove the ($\lambda$-bounded) approximation property of free spaces over various classes of metric spaces. We present a short survey of them.
	
	Regarding compact metric spaces, it is known that if $M$ is a sufficiently `thin' totally disconnected metric space then $\free{M}$ has the MAP. More precisely, if $M$ is a countable metric space such that its closed balls are compact, then $\free{M}$ has the MAP \cite{adalet}. Also, $\free{M}$ has the MAP when $M$ is compact and uniformly disconnected \cite{weaver}*{Corollary 4.39}, meaning that there exists $0 < a \leq 1$ such that for every distinct $p,q\in M$ there exist complementary clopen sets $C,D \subseteq M$ such that $p\in C, q\in D$ and $d(C,D) \geq a d(p,q)$, where $d(C,D) = \inf \{d(x,y) : x\in C, y\in D\}$. For example, the middle-third Cantor set $C$ and the Cantor dust $C^2 \subseteq \R^2$ satisfy the previous hypotheses. On the other hand, there exists a compact convex subset $K$ of a Banach space such that $\free{K}$ fails the approximation property (AP) \cite{godefroyozawa}. Moreover, in \cite{hajeklancienpernecka} it is shown that there exist metric spaces homeomorphic to the Cantor set whose free spaces fail the AP. The recent result \cite{douchakaufmann} shows that free spaces over compact groups equipped with a left-invariant metric have the MAP. A characterisation of the BAP on free spaces in terms of uniformly bounded sequences of Lipschitz `almost extension' operators is given in \cite{godefroy:15}.
	
	Apart from the compact case, it is known that if $M$ is a separable ultrametric space then $\free{M}$ has a monotone Schauder basis (and hence the MAP), and is linearly isomorphic to $l^1$ \cite{cuthdoucha}. Also, the free space over the Urysohn space has the MAP \cite{fonfw}. In \cite{kalton} it is proved that $\free{M}$ has the AP for a uniformly discrete metric space $M$, although the question whether it has the BAP is still open and of particular interest. 
	
	In his survey of Lipschitz free spaces \cite{godefroysurvey}, Godefroy asked whether the free space over any closed subset of a finite-dimensional Banach space has the MAP. It is known that $\free{M}$ has the $CN$-BAP for any $M\subseteq \R^N$, where $C$ is a universal constant, and $\R^N$ is equipped with an arbitrary norm \cite{apandschd}. This follows from the more general result that free spaces over doubling metric spaces have the BAP \cite{apandschd}*{Corollary 2.2}. In the case of the Euclidean norm on $\R^N$, we have the improvement that $\free{M}$ has the $C\sqrt{N}$-BAP, for an arbitrary subset $M\subseteq (\R^N, \nm\cdot _2)$ \cite{apandschd}*{Corollary 2.3}. In \cite{ps:15} it is shown that free spaces over compact subsets of $\R^N$ onto which there are certain Lipschitz `almost retractions' defined on slightly bigger subsets (see below for the precise formulation) have the MAP, where again $\R^N$ is equipped with an arbitrary norm. This includes the case of compact convex subsets of $\R^N$, which contrasts starkly with the infinite-dimensional case mentioned above. Also, it is known that $\free{M}$ has the MAP when $M$ is purely 1-unrectifiable, which is equivalent to the condition that $M$ contains no bi-Lipschitz image of a compact subset of $\R$ of positive measure. That $\free{M}$ has the MAP in this case follows because it is separable, has the BAP and is a dual space \cite{purely1unrect}*{Theorem B}. In \cite{apandschd} it is shown that $\free{\R^N, \nm\cdot _1}$ and $\free{l^1}$ have monotone finite-dimensional Schauder decompositions. In \cite{schauderbases} it is also proved that $\free{\R^N}$ and $\free{l^1}$ have Schauder bases, where $\R^N$ is equipped with an arbitrary norm.
	
	Motivated by Godefroy's question, in this paper we extend the class of subsets of $\R^N$ whose free spaces have the MAP by proving that the free space over a closed $C^1$-submanifold of $\R^N$ has the MAP, with respect to any norm. More precisely, we prove the following result.
	
	\begin{thm}\label{thm_main}
		Let $\nm\cdot$ be a norm on $\R^N$ and $M$ be a closed $C^1$-submanifold of $\R^N$ with an arbitrary distinguished point $x_0$. Equip $M$ with the metric $d(x,y) = \nm{x-y}$, $x,y\in M$. Then $\free{M}$ has the MAP.
	\end{thm}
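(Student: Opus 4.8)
The plan is to verify that $M$ falls within the scope of the almost-retraction criterion of \cite{ps:15}; the geometric input that makes this possible is that a $C^1$-submanifold is \emph{infinitesimally flat}, i.e.\ near any of its points it is, after rescaling, arbitrarily close to its tangent plane. Since $\mathrm{span}\{\delta_x : x\in M\}$ is dense in $\free M$, it suffices to construct, for every finite $F\subseteq M$ and every $\varepsilon>0$, a finite-rank operator $T$ on $\free M$ with $\nm T\le 1+\varepsilon$ and $\nm{T\delta_x-\delta_x}<\varepsilon$ for all $x\in F$: replacing $T$ by $(1+\varepsilon)^{-1}T$ and letting $(F,\varepsilon)$ vary then exhibits the MAP. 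I would further reduce to $M$ compact, by exhausting $M$ with compact $C^1$-submanifolds-with-boundary $M_1\subseteq M_2\subseteq\cdots$ and, for $n$ so large that $F\subseteq M_n$, composing a finite-rank operator obtained on $\free{M_n}$ with the operator induced by a Lipschitz map of $M$ onto $M_n$ of Lipschitz constant close to $1$, obtained by folding the ends of $M$ inwards --- or, alternatively, by checking that the proof of the criterion of \cite{ps:15} goes through for closed rather than compact subsets of $\R^N$.

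Next, fix $\varepsilon>0$. For each $p\in M$ there is a radius $r_p>0$ such that $M\cap B(p,r_p)$ is the graph of a $C^1$-map over a relatively open neighbourhood of $0$ in the tangent space $T_pM$ with derivative of norm at most $\varepsilon$ throughout; continuity of the tangent field makes $r_p$ locally bounded below, which is all that is needed once $M$ is compact. The graph parametrisation is $(1+\varepsilon)$-bi-Lipschitz onto $M\cap B(p,r_p)$ for the metric $d$, and, fixing once and for all a linear complement of $T_pM$ in $\R^N$, the associated graph projection is a Lipschitz retraction of a thin tube around this piece of $M$ onto it. Since $\nm\cdot$ is an arbitrary norm, rather than Euclidean, the norm of this projection cannot be taken close to $1$, only bounded on a compact portion of $M$; controlling this bounded distortion together with the ratio of the tube radius to the chart size is part of the work.

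Covering a compact piece of $M$ containing $F$ by finitely many such charts with bounded overlap and choosing a Lipschitz partition of unity subordinate to the cover, one assembles the local retractions into a single Lipschitz map $r$ that almost-retracts a neighbourhood of $M$ onto $M$ and is close to the identity on $M$. Feeding $r$ into the machinery of \cite{ps:15} --- together with the norm-one finite-rank operators on $\free{\R^N}$ provided by the MAP of $\free{\R^N}$ (Godefroy--Kalton, \cite{godefroykalton}) and the isometric inclusion $\free M\hookrightarrow\free{\R^N}$ --- produces the desired finite-rank operators on $\free M$, and hence the MAP.

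The crux, and the step I expect to be the main obstacle, is the globalisation: a naive partition-of-unity average of the local retractions both degrades the Lipschitz constant by a dimension-dependent factor (the mechanism behind the bounded approximation property of free spaces over doubling spaces in \cite{apandschd}, but too lossy for the MAP) and, as $M$ is not convex, need not take values in $M$ at all. Overcoming this calls for a quantitative treatment of the overlaps --- exploiting that on an overlap the competing local retractions differ only by the tube radius, not by the chart diameter, so that the resulting error can be pushed below $\varepsilon$ by shrinking the tubes relative to the charts --- and for performing the gluing so that the output stays on $M$, e.g.\ by composing the local retractions in a controlled telescoping order instead of averaging. The interaction of the arbitrary norm (which rules out orthogonal projections onto tangent spaces and forces one to work with bounded, but not almost-isometric, graph projections) with these overlap estimates is the delicate point, and is also why it is convenient to pass to the compact case first, where only finitely many charts of comparable size are in play.
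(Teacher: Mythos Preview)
Your plan has a genuine gap at its very first step: the almost-retraction criterion of \cite{ps:15} (Theorem~\ref{thm_ps} in the paper) does \emph{not} apply to all closed $C^1$-submanifolds of $(\R^N,\nm\cdot)$, and the paper spends the end of its introduction constructing an explicit counterexample. There is a $C^\infty$-norm on $\R^3$ whose unit sphere $M$ --- a compact $C^\infty$-submanifold --- admits, for small $\xi>0$, no Lipschitz map $\Psi\colon\hat M\to M$ from a neighbourhood $\hat M\supseteq M$ with $\Lip(\Psi)\le 1+\xi$ and $\nmt{\Psi(x)-x}\le\xi$, where $\nmt\cdot$ is a suitably chosen ambient norm for which $\R^2\times\{0\}$ is not $1$-complemented. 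The obstruction is exactly the one you name but underestimate: for a non-Euclidean norm a tangent subspace need not be $1$-complemented, and the argument in the introduction shows that \emph{any} sequence of almost-retractions with Lipschitz constants tending to $1$ would yield, by averaging derivatives over a flat patch of $M$ and passing to a limit, a norm-one projection onto that tangent space. So no gluing scheme --- partition of unity, telescoping composition, or otherwise --- can produce the required $\Psi$. Shrinking the tube radius relative to the chart diameter controls the displacement $\nmt{\Psi(x)-x}$ but does nothing for $\Lip(\Psi)$, which is what the criterion needs.

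The paper's route therefore abandons retractions onto $M$ and works on the dual side. It mollifies $f\in\Lipo M$ by averaging $f\circ\psi$ over small balls, where $\psi$ is Whitney's $C^1$ retraction of a tubular neighbourhood onto $M$. The key observation (Lemma~\ref{lemma:2}) is that although $\psi$ may have large Lipschitz constant, the increment $\psi(x+z)-\psi(y+z)$ is uniformly close to $x-y$ for $x,y\in M$ close and $z$ small, because $D\psi$ restricts to the identity on each tangent space (Lemma~\ref{lm:dpsiproj}); this produces mollified functions with Lipschitz constant at most $1+\varepsilon$ without ever requiring a $(1+\varepsilon)$-Lipschitz map into $M$. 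Finite-rank approximants are then obtained by interpolating the mollified functions on grids in local charts (Lemma~\ref{lm:grid}, Theorem~\ref{thm:finiterankop}), gluing via a partition of unity on the function side (Lemma~\ref{lm:gluing}), and cutting off at infinity (Proposition~\ref{prop:flat}) to handle non-compact $M$ directly --- no exhaustion by compact submanifolds-with-boundary is needed.
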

	
	We repeat the main result of \cite{ps:15} as it is necessary to show that it cannot be applied to all $C^1$-submanifolds (even compact ones).
	
	\begin{thm}[\cite{ps:15}*{Theorem 1.1.}]\label{thm_ps}
		Let $N \geq 1$ and consider $\R^N$ equipped with some norm $\nm\cdot$. Let a compact set $M \subseteq \R^N$ have the property that given $\xi>0$, there exists a set $\hat{M}\subseteq \R^N$ and a Lipschitz map $\Psi\colon \hat{M}\to M$, such that $M \subseteq \inter{\hat{M}}$, $\Lip(\Psi)\leq 1 + \xi$ and $\nm{x-\Psi(x)} \leq \xi$ for all $x \in \hat{M}$. Then the Lipschitz-free space $\free{M}$ has the MAP.
	\end{thm}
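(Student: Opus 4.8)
The plan is to produce, for every $\varepsilon>0$, a finite-rank operator $T\colon\free{M}\to\free{M}$ with $\nm{T}\le 1+\varepsilon$ and $\sup_{x\in M}\nm{T\delta_x-\delta_x}\le\varepsilon$. Since $M$ is compact, $\free{M}$ is separable and $\{\delta_x:x\in M\}$ has dense linear span; so running this construction along a sequence $\varepsilon_n\to 0$ yields finite-rank operators $T_n$ with $\nm{T_n}\to 1$ that converge to the identity on a dense set, hence, being uniformly bounded, in the strong operator topology. Replacing $T_n$ by $T_n/\max\{1,\nm{T_n}\}$ gives finite-rank contractions still converging strongly to the identity, so $\free{M}$ has the MAP. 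Everything thus reduces to the construction of $T$.

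Fix $\varepsilon>0$ and a small $\xi>0$ (to be chosen last), and apply the hypothesis to get $\hat M$ and $\Psi\colon\hat M\to M$ with $M\subseteq\inter{\hat M}$, $\Lip(\Psi)\le 1+\xi$ and $\nm{x-\Psi(x)}\le\xi$ on $\hat M$. By compactness of $M$ choose $\rho>0$ so small that $M_{2\rho}:=\{x\in\R^N:\inf_{y\in M}\nm{x-y}\le 2\rho\}\subseteq\hat M$; we shall also let $\rho\to 0$. I would set $T=\Psi_{\#}\circ S_{\rho}$, where $\Psi_{\#}\colon\free{M_{2\rho}}\to\free{M}$ is the operator induced by $\Psi|_{M_{2\rho}}$, given on generators by $\Psi_{\#}\delta_w=\delta_{\Psi(w)}-\delta_{\Psi(x_0)}$ and of norm $\le\Lip(\Psi)\le 1+\xi$ (functoriality of free spaces, in the version not requiring the map to fix the base point), and $S_{\rho}\colon\free{M}\to\free{M_{2\rho}}$ is an operator that first mollifies at scale $\rho$ and then discretises at the much finer scale $r=\rho^{2}$. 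Concretely, fix once and for all a smooth symmetric probability density $\mu$ supported in the unit ball of $\nm{\cdot}$, put $\mu_\rho=\rho^{-N}\mu(\cdot/\rho)$, and fix the multilinear-interpolation partition of unity $\{\varphi_j\}_{j\in\mathbb{Z}^N}$ on the grid $z_j=rj$ (the tensor product of rescaled one-dimensional hat functions), which satisfies $\sum_j\varphi_j\equiv 1$, the affine-reproduction identity $\sum_j\varphi_j(x)z_j=x$, $\Lip(\varphi_j)\le C/r$ and bounded overlap. Put $\psi_j=\int_{\R^N}\delta_w\,\mu_\rho(w-z_j)\,\md w\in\free{M_{2\rho}}$ (a legitimate Bochner integral: for the finitely many $j$ with $z_j$ within $2\rho$ of $M$, $\mu_\rho(\cdot-z_j)$ is supported in $M_{2\rho}$), and let $S_{\rho}$ be the operator with
$$S_{\rho}\delta_x=\sum_{j}\varphi_j(x)\psi_j-\sum_j\varphi_j(x_0)\psi_j\qquad(x\in M),$$
the subtracted term ensuring $S_{\rho}\delta_{x_0}=0$; boundedness is cleanest to verify through the adjoint, described next. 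Only finitely many $j$ contribute for $x\in M$, so $S_{\rho}$, and hence $T$, has finite rank.

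Two estimates remain. For the norm I would use $\nm{S_{\rho}}=\nm{S_{\rho}^{*}}$ together with $\langle\psi_j,f\rangle=g(z_j)$, where $g=f*\mu_\rho$ is the mollification of $f\in\Lipo{M_{2\rho}}$; thus $S_{\rho}^{*}f$ equals, up to an additive constant, the multilinear interpolant on the scale-$r$ grid of $g$. Now $g$ is $C^{\infty}$ on a neighbourhood of $M$ with $\Lip(g)\le\Lip(f)$ and $\nm{D^{2}g}_{\infty}\le C\Lip(f)/\rho$ (standard mollifier estimates). Since the interpolation reproduces affine functions, differentiating cell by cell and Taylor-expanding $g$ to second order about the cell vertices gives, almost everywhere, $D(S_{\rho}^{*}f)(x)=Dg(x)+\sum_j D\varphi_j(x)R_j(x)$ with $|R_j(x)|\le C\nm{D^{2}g}_{\infty}r^{2}$ for the relevant vertices, whence $\Lip(S_{\rho}^{*}f)\le\Lip(g)+Cr\nm{D^{2}g}_{\infty}\le\Lip(f)(1+C'r/\rho)$, so that $\nm{S_{\rho}}\le 1+C'\rho$ and $\nm{T}\le(1+C'\rho)(1+\xi)$. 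For the approximation of the identity, rewrite $S_{\rho}\delta_x=\int_{\R^N}\delta_w\big(\sum_j\varphi_j(x)\mu_\rho(w-z_j)\big)\,\md w-(\text{base-point term})$; the kernel $\sum_j\varphi_j(x)\mu_\rho(w-z_j)$ is the multilinear interpolant of $z\mapsto\mu_\rho(w-z)$ evaluated at $x$, hence tends to $\mu_\rho(w-x)$ in $L^{1}(\md w)$, uniformly in $x$, as $r/\rho\to 0$, while $\int_{\R^N}\delta_w\,\mu_\rho(w-x)\,\md w$ lies within $\rho$ of $\delta_x$. Treating the base-point term the same way gives $\nm{S_{\rho}\delta_x-\delta_x}\le\eta(\rho)$ with $\eta(\rho)\to 0$ uniformly in $x\in M$, so, using $\nm{\delta_{\Psi(w)}-\delta_w}=\nm{\Psi(w)-w}\le\xi$,
$$\nm{T\delta_x-\delta_x}\le\nm{\Psi_{\#}}\,\nm{S_{\rho}\delta_x-\delta_x}+\nm{\Psi_{\#}\delta_x-\delta_x}\le(1+\xi)\eta(\rho)+2\xi.$$
Choosing first $\xi$ and then $\rho$ small enough makes both $\nm{T}-1$ and $\sup_{x\in M}\nm{T\delta_x-\delta_x}$ at most $\varepsilon$, completing the construction.

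The main obstacle is the norm bound $\nm{S_{\rho}}\le 1+o(1)$: one must pass to finite rank while spending only a factor $1+o(1)$ on the operator norm, whereas a crude partition-of-unity discretisation of $\free{M}$ costs a fixed constant strictly larger than $1$ --- which is exactly why the general doubling/BAP machinery of \cite{apandschd} does not deliver the MAP. The remedy is the two-scale mechanism: mollifying at scale $\rho$ turns the relevant Lipschitz test functions into $C^{\infty}$ functions whose second derivatives are only $O(1/\rho)$, so that discretising at the much finer scale $r=\rho^{2}$ with an interpolation exact on affine functions incurs only an $O(r/\rho)=O(\rho)$ error in the Lipschitz constant; the near-retraction $\Psi$ is used purely to carry the resulting element of $\free{M_{2\rho}}$ back into $\free{M}$, at cost $1+\xi$. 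The remaining points --- the uniform $L^{1}$ kernel estimate, the behaviour of the Bochner integrals and of the base-point corrections, and the functoriality bound $\nm{\Psi_{\#}}\le\Lip(\Psi)$ --- are routine.
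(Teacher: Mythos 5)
The paper states this result as a quotation of \cite{ps:15}*{Theorem~1.1} and does not prove it, so there is no in-paper proof to compare against; measured against the method of \cite{ps:15}, whose interpolation and gluing lemmas the paper re-derives as \Cref{lm:grid} and \Cref{lm:gluing} and whose scheme it extends in Section~\ref{sect_proof}, your two-scale construction (mollify at scale $\rho$, interpolate on the grid of scale $\rho^2$, return to $M$ via the near-retraction $\Psi$) is essentially the same approach. Your sketch is correct: the key identity $D(S_\rho^*f)=Dg+\sum_j D\varphi_j\,R_j$ holds exactly because the would-be extra term $\sum_j\varphi_j\,DR_j$ vanishes by the affine-reproduction identity $\sum_j\varphi_j(x)z_j=x$, and the one detail worth making explicit --- that the pointwise bound on $D(S_\rho^*f)$ really controls $\Lip_M(S_\rho^*f)$ even though $M$ and its tubes need not be convex --- is handled cleanly by first McShane-extending $f$ from $M_{2\rho}$ to all of $\R^N$ before mollifying (this changes nothing in the adjoint computation, since each $\mu_\rho(\cdot-z_j)$ is supported in $M_{2\rho}$), or alternatively by a close/far dichotomy of the kind the paper uses in Propositions~\ref{prop:xyclose} and \ref{prop:xyfar}.
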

	
	The rest of the paper is organised as follows. In Section \ref{sect_anc_supp}, we present a number of ancilliary results that support the proof of Theorem \ref{thm_main}. Section \ref{sect_proof} is devoted to the proof of Theorem \ref{thm_main} and in Section \ref{sect_open_problems} we make some remarks on open problems.
	
	We conclude the introduction by showing that there exists a $C^\infty$-norm on $\R^3$ whose unit sphere (which is a compact $C^\infty$-submanifold of $\R^3$) does not satisfy the assumptions of \Cref{thm_ps}. More precisely, we will show that there exist two norms $\nm\cdot, \nmt\cdot$ in $\R^3$, where $\nm\cdot$ is $C^\infty$, so that for the subset $M = S_{\nm\cdot}$ of $(\R^3,\nmt\cdot)$, the following condition does \emph{not} hold:
	\begin{enumerate}
		\item[($*$)]for every $\xi > 0$ there exists $\hat{M} \subset\R^3, M\subseteq \inter{\hat{M}}$ and a Lipschitz map $\Psi\colon \hat{M}\to M$ such that $\Lip(\Psi) \leq 1+\xi$ and $\nmt{\Psi(x)-x} \leq \xi$ for all $x\in\hat{M}$.
	\end{enumerate}
	
	It is known that if a Banach space $X$ is of dimension at least 3 then $X$ is isometrically isomorphic to a Hilbert space if and only if every subspace of $X$ is 1-complemented \cite{pb:87}*{9.3}. We can therefore fix a norm $\nmt\cdot$ on $\R^3$ such that the subspace $\R^2 \times \{0\}$ is not 1-complemented. To define $\nm\cdot$, let $\phi\colon [0,\infty)\to [0,\infty)$ be a convex $C^\infty$-function such that $\phi(1)=1$ and $\phi(t)=0$ for $t \in [0,\frac{3}{4}]$. Define the convex even $C^\infty$-function $\Phi\colon \R^3 \to [0,\infty)$ by
	\[
	\Phi(x) \;=\; \sum_{i=1}^3 \phi({\ts\frac{1}{2}}|x_i|),
	\]
	the convex set $B=\{x \in \R^N : \Phi(x) \leq 1\}$ and finally let $\nm\cdot$ be the Minkowski functional of $B$. An application of the Implicit Function Theorem demonstrates that $\nm\cdot$ is a $C^\infty$-norm; for more details see e.g.~\cite{dgz}*{Section V.1}.
	
	For a contradiction, assume that $M = S_{\nm\cdot}$ possesses the property labelled ($*$) above. It is easy to check that $\Phi(x)=1$ whenever $|x_1|,|x_2| \leq \frac{3}{2}$ and $x_3=2$, and thus all such points $x$ belong to $M$. Define $C = [0,1]^2\times\{0\}$ and $D=[-\frac{1}{2},\frac{3}{2}]\times\{0\}$. By translating $M$ we can see that for all sufficiently small $\xi > 0$ there exists an open set $U\subseteq\R^3$ that includes $C$ and a map $\Psi\colon U\to D$ such that $\Lip(\Psi) \leq 1+\xi$ and $\nmt{\Psi(x)-x}\leq\xi$ for every $x\in U$. By taking a convolution of $\Psi$ and a $C^\infty$-mollifier supported on a small enough neighbourhood of $0$, we can assume that $\Psi$ is $C^\infty$ and still satisfies the previous two inequalities. Choose some sufficiently small $\xi > 0$ and let $U$ and $\Psi$ be defined as previously. The total derivative $D\Psi(x,y,z)$ of $\Psi$ at $(x,y,z)$ maps $(\R^3,\nmt\cdot)$ linearly into $\R^2\times\{0\}$ and satisfies $\nm{D\Psi(x,y,z)}_{op} \leq 1+\xi$. Define the linear operator $T:\R^3 \to \R^2 \times \{0\}$ by $$T(a,b,c) = \int_{C} D\Psi(x,y,0)(a,b,c) \:\mathrm{d}x\mathrm{d}y.$$ We estimate
	\begin{align*}
	&\nmt{T(1,0,0) - (1,0,0)} \\=& \nmt{\int_0^1\int_0^1 D\Psi(x,y,0)(1,0,0) \:\mathrm{d}x\mathrm{d}y - (1,0,0)} \\=& \nmt{\int_0^1\int_0^1 \frac{d\Psi}{dx}(x,y,0) \:\mathrm{d}x\mathrm{d}y - (1,0,0)} \\=& \nmt{\int_0^1 \Psi(1,y,0) - \Psi(0,y,0) \:\mathrm{d}y - (1,0,0)} \\=& \nmt{\int_0^1 \Psi(1,y,0) - (1,y,0) - (\Psi(0,y,0)-(0,y,0)) \:\mathrm{d}y} \\ \leq& \int_0^1 \nmt{\Psi(1,y,0) - (1,y,0)}\:\mathrm{d}y + \int_0^1 \nmt{\Psi(0,y,0)-(0,y,0)} \:\mathrm{d}y \leq 2\xi.
	\end{align*}
	Similarly, we can show that $\nmt{T(0,1,0) - (0,1,0)} \leq 2\xi$. Now for all sufficiently large $n\in\N$, we choose $\Psi_n$ corresponding to $\xi_n = \frac{1}{n}$ and let $T_n$ be defined with respect to $\Psi_n$ as above. Since $\nm{T_n}_{op} \leq 1+\frac{1}{n}$, $(T_n)_n$ has a subsequence converging to some linear map $T$. Obviously $\nm T _{op} \leq 1$ and the range of $T$ is $\R^2\times\{0\}$. Also, by the above computations, the restriction of $T$ to $\R^2\times\{0\}$ is the identity. Therefore $T$ is a norm-one projection onto $\R^2\times\{0\}$, a contradiction.

	\section{Ancilliary results}\label{sect_anc_supp}
	
	Let $\R^N$ be equipped with the Euclidean norm $\nm\cdot _2$ and let $\nm\cdot$ be some other norm. Fix $K \geq 1$ such that $$K^{-1}\nm\cdot _2 \;\leq\; \nm\cdot \;\leq\; K \nm\cdot _2.$$ Denote the closed Euclidean ball of radius $R>0$ centered at $y\in\R^N$ by $B_R(y)$ and write $B_R = B_R(0)$. For a subset $U\subseteq\R^N$, write $\overline{U}$ for the closure of $U$. For a Lipschitz function $f\colon U\to\R$, where $U\subseteq\R^N$ is some set, write $\Lip_{\nm\cdot}(f)$ and $\Lip_{\nm\cdot _2}(f)$ for its Lipschitz constants measured with respect to $\nm\cdot$ and $\nm\cdot _2$, respectively. By $\Lip(f)$ we will mean $\Lip_{\nm\cdot}(f)$. Define also $\nm f _\infty = \sup\{|f(x)| : x\in U\}$. Given a differentiable map $\phi\colon U\to\R^{N'}$, where $U\subseteq \R^N$ is open, we denote by $D\phi(x)$ the total derivative of $\phi$ at $x$, which is a linear operator from $\R^N$ to $\R^{N'}$. The following two definitions are the same as in \cite{leo-stein}*{Definition 2}.
	
	\begin{defn}
		Let $d,k\in\N$, $1\leq d\leq N$. The subset $M\subseteq\R^N$ is called a \emph{$d$-dimensional $C^k$-submanifold} of $\R^N$ if for every $x\in M$ there exist open sets $V,U\subseteq\R^N$ and a $C^k$-diffeomorphism $\phi\colon V \to U$ such that $0\in V, x\in U$, $\phi(0) = x$ and $$\phi((\R^d \times \{(\underbrace{0,\ldots,0}_{N-d})\}) \cap V) = M\cap U.$$
	\end{defn}
	
	\begin{defn}
		If $M$ is a $d$-dimensional $C^k$-submanifold of $\R^N$ and $x\in M$, we define the \emph{tangent space} at $x$: $$T_x=\{v\in\R^N:  \exists \gamma\colon (-1,1)\to M \text{ a $C^1$-curve such that } \gamma(0) = x \text{ and }\gamma'(0)=v\}.$$
	\end{defn}
	Note that the tangent space $T_x$ is a $d$-dimensional vector subspace of $\R^N$. Denote the orthogonal projection onto $T_x$ by $P_x$.
	
	Let $M$ be a closed $d$-dimensional $C^1$-submanifold of $\R^N$. By translation, if necessary, we can assume that $0\in M$. Let $\Lipo{M}$ be the Banach space of all Lipschitz functions on $M$ vanishing at $0$ equipped with the norm $\Lip_{\nm\cdot}(\cdot)$, and denote the closed unit ball of $\Lipo{M}$ by $B_{\Lipo{M}}$. 
	
	We begin with some basic lemmas and propositions that allow us to mollify a given Lipschitz function on bounded subsets of $M$ without increasing its Lipschitz constant by much.
	
	\begin{lem}\label{lemma:1}
		Let $R,\epsilon > 0$. Then there exists $\delta > 0$ such that $\nm{y-x-P_x(y-x)}_2 \leq \epsilon\nm{y-x}_2$ whenever $x,y\in M\cap B_R$ satisfy $\nm{x-y}_2\leq\delta$.
	\end{lem}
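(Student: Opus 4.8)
The plan is to argue by contradiction, using the fact that $M\cap B_R$ is compact (being closed and bounded in $\R^N$) to reduce the estimate to a single local parametrisation of $M$, and then to invoke the uniform continuity of the derivative of that parametrisation.

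Suppose the conclusion fails for some $R,\epsilon>0$. Then there are sequences $(x_n),(y_n)$ in $M\cap B_R$ with $x_n\neq y_n$, $\nm{x_n-y_n}_2\to 0$, yet $\nm{y_n-x_n-P_{x_n}(y_n-x_n)}_2>\epsilon\nm{y_n-x_n}_2$ for every $n$. Passing to a subsequence, we may assume $x_n\to x_0$ for some $x_0\in M\cap B_R$; then $y_n\to x_0$ as well. Using the definition of a $C^1$-submanifold, fix a chart $\phi\colon V\to U$ at $x_0$ and set $\psi(s)=\phi(s,0,\dots,0)$, defined for $s$ in the open neighbourhood $W\subseteq\R^d$ of $0$ given by $(s,0,\dots,0)\in V$. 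Then $\psi\colon W\to M\cap U$ is a homeomorphism onto a relatively open neighbourhood of $x_0$ in $M$, $\psi(0)=x_0$, and each $D\psi(s)$ is injective, being a restriction of the invertible map $D\phi(s,0,\dots,0)$. Since $D\psi$ is continuous and $D\psi(0)$ is injective, after shrinking $W$ we may assume $\overline{W}$ is compact and there is $c>0$ with $\nm{D\psi(s)v}_2\geq c\nm{v}_2$ for all $s\in W$, $v\in\R^d$; in particular $D\psi$ is uniformly continuous on $W$.

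For all large $n$ the points $x_n,y_n$ lie in $\psi(W)$, so write $x_n=\psi(s_n)$, $y_n=\psi(t_n)$, whence $s_n,t_n\to 0$. Two observations drive the argument. First, $D\psi(s_n)(\R^d)\subseteq T_{x_n}$, since for $v\in\R^d$ the curve $u\mapsto\psi(s_n+uv)$ is a $C^1$-curve in $M$ through $x_n$ with velocity $D\psi(s_n)v$ at $u=0$; consequently $(I-P_{x_n})D\psi(s_n)=0$. Second, by the fundamental theorem of calculus $y_n-x_n=\int_0^1 D\psi(s_n+\theta(t_n-s_n))(t_n-s_n)\,\md\theta$. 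Writing $E_n=\int_0^1\bigl(D\psi(s_n+\theta(t_n-s_n))-D\psi(s_n)\bigr)(t_n-s_n)\,\md\theta$ and $\omega_n=\sup_{\theta\in[0,1]}\nm{D\psi(s_n+\theta(t_n-s_n))-D\psi(s_n)}_{op}$, we have $\nm{E_n}_2\leq\omega_n\nm{t_n-s_n}_2$ and $y_n-x_n=D\psi(s_n)(t_n-s_n)+E_n$. Applying $I-P_{x_n}$, which has operator norm at most $1$, kills the first term, so $\nm{y_n-x_n-P_{x_n}(y_n-x_n)}_2\leq\omega_n\nm{t_n-s_n}_2$; on the other hand $\nm{y_n-x_n}_2\geq(c-\omega_n)\nm{t_n-s_n}_2$. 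Since $s_n\to0$ and $t_n-s_n\to0$, uniform continuity of $D\psi$ gives $\omega_n\to0$, so for large $n$ we obtain $\nm{y_n-x_n-P_{x_n}(y_n-x_n)}_2\leq\frac{2\omega_n}{c}\nm{y_n-x_n}_2<\epsilon\nm{y_n-x_n}_2$, contradicting the choice of the sequences.

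The only point needing real care is the uniform-in-$n$ control of the error $\omega_n$; this is precisely what the boundedness hypothesis supplies, since $M\cap B_R$ being compact lets us work, after passing to a subsequence, inside a single chart where $D\psi$ is uniformly continuous. The subsidiary facts — the lower bound $\nm{D\psi(s)v}_2\geq c\nm{v}_2$ near $0$ and the inclusion $D\psi(s)(\R^d)\subseteq T_{\psi(s)}$ — are routine, following from continuity of $D\psi$ and the definition of the tangent space.
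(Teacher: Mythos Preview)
Your proof is correct and follows essentially the same line as the paper's: both use a local chart, write $y-x$ as an integral of the chart's derivative along the segment in parameter space, and exploit that this derivative maps $\R^d$ into the tangent space, so the orthogonal complement only sees the small remainder term. The only difference is cosmetic — you frame it as a contradiction argument with a convergent subsequence, whereas the paper gives a direct local estimate and then appeals to compactness of $M\cap B_R$ to globalise.
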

	\begin{proof}
		By compactness of $M\cap B_R$, it suffices to prove the lemma locally, that is, that for every $q\in M$ there exists a neighbourhood $U$ of $q$ in $M$ such that for every $\epsilon>0$ there exists $\delta>0$ such that $$\nm{y-x - P_x(y-x)}_2 \leq \epsilon \nm{y-x}_2$$ for $x,y\in U, \nm{x-y}_2 \leq \delta$.
		
		Let $q\in M$ and let $\phi$ be a $C^1$-submanifold chart around $q$, i.e.~ $\phi\colon V \to U$ is a $C^1$ diffeomorphism between the open sets $V,U\subseteq\R^N$ such that $0\in V, q\in U$, $\phi(0) = q$, $\phi(\R^d \cap V) = M\cap U$, and $\R^d \cong \R^d \times \{0\} \subseteq \R^N$. It is not hard to see that the $D\phi(v)$ maps $\R^d$ to $T_{\phi(v)}$ for every $v\in V$. 
		
		Let $\epsilon > 0$. Using the fact that $\phi$ is locally bi-Lipschitz and $D\phi$ is continuous, we shrink $V$ and $U$, if necessary, so that $V$ is convex and there exists $a\geq 1$ such that $$\frac{1}{a}\nm{v-w}_2 \leq \nm{\phi(v) - \phi(w)}_2 \leq a\nm{v-w}_2 \quad\text{and}\quad \nm{D\phi(v) - D\phi(w)}_2 \leq \frac{\epsilon}{a}$$ for all $v,w\in V$.
		
		Choose $x,y\in M\cap U$ and let $v=\phi^{-1}(x)$ and $w=\phi^{-1}(y)$. Define $f\colon [0,1] \to M$ by $f(t) = \phi(v+t(w-v))$. Then $$y-x = f(1)-f(0) = \lint{0}{1}{f'(t)}{t} = \lint{0}{1}{D\phi(v+t(w-v))(w-v)}{t},$$ meaning 
		\begin{align*}
		\nm{y-x - D\phi(v)(w-v)}_2 &\leq \lint{0}{1}{\nm{(D\phi(v+t(w-v))-D\phi(v))(w-v)}_2}{t} \\&\leq \frac{\epsilon}{a} \nm{w-v}_2 \leq \epsilon\nm{y-x}_2.
		\end{align*}
		Since $D\phi(v)(w-v) \in T_{\phi(v)} = T_x$ and $P_x (y-x)$ is the closest point to $y-x$ in $T_x$, we have \[\nm{y-x - P_x(y-x)}_2 \leq \nm{y-x - D\phi(v)(w-v)}_2 \leq \epsilon\nm{y-x}_2. \qedhere\]
	\end{proof}
	
	According to \cite{whitney}*{Lemma 23}, there exists an open neighbourhood $\ce{M}$ of $M$ and a $C^1$ map $\psi\colon \ce{M}\to M$ such that $\psi(x) = x$ for $x\in M$ (in \cite{whitney} $\ce{M}$ is called $R(M)$ and $\psi$ is called $H$). Given $\epsilon>0$, let $M^\epsilon = \{y\in\R^N : d(y,M) < \epsilon\}$, where $d(y,M) = \inf_{x\in M} \nm{y-x}_2$. We will fix the map $\psi$ for the rest of the paper. Next is a straightforward but important observation about $\psi$ for use later on.
	
	\begin{lem}\label{lm:dpsiproj}
		For $x\in M$ we have $(D\psi(x)-I)|_{T_x} = 0$.
	\end{lem}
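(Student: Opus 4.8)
The plan is to exploit the single structural feature of $\psi$ that we know, namely that it is a $C^1$ retraction: $\psi(x)=x$ for every $x\in M$. The heuristic is that a retraction onto a submanifold must be the identity ``to first order'' along the submanifold, and the tangent space $T_x$ is exactly the set of first-order directions available inside $M$ at $x$. So the strategy is to fix $x\in M$ and an arbitrary $v\in T_x$, invoke the definition of $T_x$ to produce a $C^1$-curve $\gamma\colon(-1,1)\to M$ with $\gamma(0)=x$ and $\gamma'(0)=v$, and then differentiate the identity $\psi\circ\gamma=\gamma$ at $t=0$.

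Concretely, I would argue as follows. Since $\gamma$ takes values in $M\subseteq\ce{M}$ and $\psi$ is defined and $C^1$ on the open set $\ce{M}$, the composition $\psi\circ\gamma\colon(-1,1)\to M$ is well defined and $C^1$, and because $\psi$ fixes $M$ pointwise we have $\psi(\gamma(t))=\gamma(t)$ for all $t\in(-1,1)$. Applying the chain rule at $t=0$ gives $D\psi(\gamma(0))\big(\gamma'(0)\big)=\gamma'(0)$, that is, $D\psi(x)(v)=v$. As $v\in T_x$ was arbitrary, $D\psi(x)$ agrees with the identity on $T_x$, which is precisely the assertion $(D\psi(x)-I)|_{T_x}=0$.

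There is essentially no obstacle here: the only points to be careful about are that $\psi\circ\gamma$ is genuinely differentiable (which is immediate from $\gamma$ mapping into the domain $\ce{M}$ of the $C^1$ map $\psi$) and that the chain rule is being applied to the right composition; both are routine. The lemma will be used later via the fact that $P_x$ is the orthogonal projection onto $T_x$, so that $D\psi(x)\circ P_x$ and $P_x$ coincide on $T_x$, letting us control the ``normal'' behaviour of $\psi$ near $M$.
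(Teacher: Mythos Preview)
Your proof is correct and follows essentially the same idea as the paper's: both differentiate the retraction identity $\psi|_M=\mathrm{id}_M$ at $x$ in a tangent direction to obtain $D\psi(x)(v)=v$. The only cosmetic difference is that you work directly with a $C^1$-curve $\gamma$ supplied by the definition of $T_x$, whereas the paper passes through a submanifold chart $\phi$ and differentiates $(\psi\circ\phi)|_{\R^d}=\phi|_{\R^d}$ at $0$; your route is marginally more direct.
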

	\begin{proof}
		Let $x\in M$ and $v\in T_x$. We need to show that $D\psi(x)(v) = v$. Let $\phi\colon V \to U$ be a $C^1$-submanifold chart around $x$, where $V,U\subseteq\R^N$, $0\in V, x\in U$, $\phi(0) = x$. As $D\phi(0)$ maps $\R^d$ onto $T_x$, there exists $u\in \R^d$ such that $D\phi(0)(u)=v$. Since $(\psi\circ\phi)|_{\R^d} = \phi|_{\R^d}$, we indeed have \[D\psi(x)(v) = D(\psi\circ\phi)(0)(u) = D((\psi\circ\phi)|_{\R^d})(0)(u) = D(\phi|_{\R^d})(0)(u) = v. \qedhere\]
	\end{proof}
	
	The following lemma will be of key importance when we mollify Lipschitz funcitons defined on bounded subsets of $M$.
	\begin{lem}\label{lemma:2}
		For every $R,\epsilon>0$ there exists $\delta > 0$ such that $$\nm{\psi(x+z)-\psi(y+z)-(x-y)}_2 \leq \epsilon \nm{x-y}_2$$ for all $x,y\in M\cap B_R$ and $z\in\R^N$ such that $\nm{x-y}_2, \nm z _2\leq \delta$.
	\end{lem}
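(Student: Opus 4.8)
The plan is to represent $\psi(x+z)-\psi(y+z)-(x-y)$ as an integral of $D\psi(\cdot)-I$ along the Euclidean segment from $y+z$ to $x+z$, and then to bound the integrand by splitting $D\psi(w)-I = \big(D\psi(w)-D\psi(x)\big) + \big(D\psi(x)-I\big)$, where the first term is controlled by uniform continuity of $D\psi$ near $M$ and the second by \Cref{lm:dpsiproj} combined with \Cref{lemma:1}.

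Concretely, I would first fix $R$ and pick $\delta_0 > 0$ so small that the closed Euclidean $2\delta_0$-neighbourhood $N_0$ of the compact set $M\cap B_R$ is a compact subset of $\ce{M}$; this is possible since $\ce{M}$ is open and contains $M\cap B_R$. On $N_0$ the map $D\psi$ is uniformly continuous, and the constant $L := \max\big\{1,\ \sup_{x\in M\cap B_R}\nm{D\psi(x)-I}_2\big\}$ is finite. Given $\epsilon>0$, I would apply \Cref{lemma:1} with $\epsilon/(2L)$ in place of $\epsilon$ to obtain $\delta_1>0$ such that $\nm{(x-y)-P_x(x-y)}_2 \leq \tfrac{\epsilon}{2L}\nm{x-y}_2$ for $x,y\in M\cap B_R$ with $\nm{x-y}_2\leq\delta_1$, and use uniform continuity of $D\psi$ on $N_0$ to obtain $\delta_2>0$ with $\nm{D\psi(w)-D\psi(w')}_2 \leq \epsilon/2$ whenever $w,w'\in N_0$ and $\nm{w-w'}_2\leq 2\delta_2$. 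Then I would set $\delta=\min\{\delta_0,\delta_1,\delta_2\}$.

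For $x,y\in M\cap B_R$ and $z$ with $\nm{x-y}_2,\nm z_2\leq\delta$, every point $w_t:=y+z+t(x-y)$ with $t\in[0,1]$ satisfies $\nm{w_t-x}_2\leq\nm z_2+\nm{x-y}_2\leq 2\delta_0$, hence lies in $N_0\subseteq\ce{M}$; so $\psi$ is defined on the whole segment and, by the Fundamental Theorem of Calculus applied to $t\mapsto\psi(w_t)$,
$$\psi(x+z)-\psi(y+z)-(x-y) \;=\; \lint{0}{1}{\big(D\psi(w_t)-I\big)(x-y)}{t}.$$
For each $t$ I would estimate $\nm{(D\psi(w_t)-D\psi(x))(x-y)}_2 \leq \nm{D\psi(w_t)-D\psi(x)}_2\,\nm{x-y}_2 \leq \tfrac{\epsilon}{2}\nm{x-y}_2$, using $\nm{w_t-x}_2\leq 2\delta_2$; and, since $P_x(x-y)\in T_x$, \Cref{lm:dpsiproj} gives $(D\psi(x)-I)(P_x(x-y))=0$, so $(D\psi(x)-I)(x-y)=(D\psi(x)-I)\big((x-y)-P_x(x-y)\big)$ has norm at most $L\cdot\tfrac{\epsilon}{2L}\nm{x-y}_2=\tfrac{\epsilon}{2}\nm{x-y}_2$. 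Adding these bounds and integrating over $t\in[0,1]$ yields the desired inequality.

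The main obstacle is getting the quantifiers to line up: the available derivative information, \Cref{lm:dpsiproj}, concerns $D\psi$ \emph{at points of $M$} acting on \emph{tangent} vectors, whereas here one must evaluate $D\psi$ at the off-manifold points $x+z$, $y+z$ and apply it to the merely near-tangent vector $x-y$. Bridging this requires both perturbation estimates simultaneously — \Cref{lemma:1} to control the tangential defect of $x-y$, and uniform continuity of $D\psi$ on $N_0$ to control the displacement from $M$ — and, crucially, the whole computation must be carried out along the $x$–$y$ direction so that a factor $\nm{x-y}_2$ is extracted; running the Fundamental Theorem of Calculus along the $z$-direction instead would leave the uncontrollable ratio $\nm z_2/\nm{x-y}_2$.
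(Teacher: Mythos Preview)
Your proposal is correct and is essentially the same argument as the paper's: both integrate $D\psi-I$ along the segment from $y+z$ to $x+z$, split $D\psi(w_t)-I$ as $(D\psi(w_t)-D\psi(x))+(D\psi(x)-I)$, and control the two pieces via uniform continuity of $D\psi$ and via \Cref{lm:dpsiproj} together with \Cref{lemma:1}, respectively. The only cosmetic difference is that you take the constant $L$ as a supremum over $M\cap B_R$ while the paper takes its analogous constant $B$ over a slightly larger neighbourhood; since the second term is evaluated only at $x\in M\cap B_R$, your choice is equally valid.
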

	\begin{proof}
		Let $R,\epsilon>0$. By uniform continuity of $D\psi$ on compact sets, we choose $\delta_1 > 0$ such that $\overline{M^{2\delta_1}} \cap B_{R+2\delta_1} \subseteq \ce{M}$ and
		\begin{align}
		\nm{D\psi(q)-D\psi(u)}_2 \leq \frac{\epsilon}{2} \text{\; whenever \;} q,u\in \overline{M^{2\delta_1}} \cap B_R, \nm{q-u}_2\leq 2\delta_1. \label{eq:Dpsi}
		\end{align} 
		Set $$B = \max\{\nm{D\psi(q)-I}_2 : q\in \overline{M^{2\delta_1}} \cap B_{R+2\delta_1}\}.$$ We apply \Cref{lemma:1} to $R$ and $\frac{\epsilon}{2B}$ to find a corresponding $\delta_2 > 0$. Set $\delta = \min(\delta_1, \delta_2)$ and let $x,y\in M\cap B_R, \nm{x-y}_2, \nm z _2\leq \delta$.
		
		For $t\in[0,1]$ we have $\nm{x+z+t(y-x) - x}_2 \leq 2\delta_1$ and so $x+z+t(y-x) \in \overline{M^{2\delta_1}} \cap B_{R+2\delta_1}$. Therefore we can define $f(t) = \psi(x+z+t(y-x)) - (x+ t(y-x))$, $t\in [0,1]$. We have
		\begin{align*}
		\nm{\psi(x+z)-\psi(y+z)-(x-y)}_2 &= \nm{f(1) - f(0)}_2 = \nm{\lint{0}{1}{f'(t)}{t}}_2 \\&\leq \lint{0}{1}{\nm{f'(t)}_2}{t}.
		\end{align*}
		Furthermore,
		\begin{align*}
		&\nm{f'(t)}_2 \\
		=& \nm{D\psi(x+z+t(y-x))(y-x) - (y-x)}_2 \\
		\leq& \nm{(D\psi(x+z+t(y-x)) - D\psi(x))(y-x)}_2 + \nm{(D\psi(x)-I)(y-x)}_2 \\
		\leq& \frac{\epsilon}{2}\nm{y-x}_2 + \nm{(D\psi(x)-I)(y-x - P_x(y-x))}_2 \tag*{by \eqref{eq:Dpsi} and \Cref{lm:dpsiproj}} \\
		\leq& \frac{\epsilon}{2}\nm{y-x}_2 + B\frac{\epsilon}{2B}\nm{y-x}_2 = \epsilon\nm{y-x}_2. \qedhere
		\end{align*}
	\end{proof}
	
	We next define a certain convolution of a given Lipschitz function $f$ on $M$ with a standard mollifier. First, define $\nu\colon \R^N \to [0,+\infty)$ by
	\begin{equation*}
	\nu(x) = 
	\begin{cases}
	A\exp \left(\frac{1}{\nm x ^2 _2 - 1}\right) & \text{ if } \nm x _2 < 1, \\
	0 & \text{ if } \nm x _2 \geq 1,
	\end{cases}
	\end{equation*}
	where $A>0$ is chosen so that $\lint{\R^N}{}{\nu (x)}{x} = 1$. Next, for each $s>0$ we put $$\nu_s(x) = \frac{1}{s^N}\nu\left(\frac{x}{s}\right).$$ The function $\nu_s$ is $C^\infty$ and satisfies $\lint{\R^N}{}{\nu_s (x)}{x} = 1$ and $\supp(\nu_s) \subseteq B_s(x)$.
	
	Let $R\geq 1$ be fixed. Choose $\delta_0 > 0$ such that $\overline{M^{2\delta_0}} \cap B_{R+2\delta_0} \subseteq \ce{M}$. Fix $s\in (0,\delta_0]$ and let $f\in \Lipo{M}$. For $x \in \overline{M^{\delta_0}} \cap B_{R+\delta_0}$ we define $$\hat{f}_s(x) = \lint{B_s}{}{\nu_s (z)f(\psi(x+z))}{z} = \lint{x+B_s}{}{\nu_s (z-x)f(\psi(z))}{z}.$$
	
	In the next two results, we fix $f\in B_{\Lipo{M}}$ and estimate the Lipschitz constant of $\hat f _s$ by first considering points that are close together and then far from each other. Since $\psi$ is $C^1$ on $\ce{M}$, it is Lipschitz on the compact set $\overline{M^{2\delta_0}} \cap B_{R+2\delta_0}$. Observe that the next result holds even if, when restricted to this set, the Lipschitz constant of $\psi$ (relative to $\nm\cdot$) is large.
	
	\begin{prop}\label{prop:xyclose}
		Let $\epsilon > 0$. There exists $\delta \in (0,\delta_0]$ such that $$|\hat f _s(x) - \hat f _s(y)| \leq (1+\epsilon)\nm{x-y}$$ whenever $x,y\in M\cap B_{R+\delta_0}$, $\nm{x-y}\leq\delta$ and $s \in (0,\delta]$.
	\end{prop}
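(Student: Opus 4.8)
The plan is to reduce the estimate to the pointwise bound on $\psi$ furnished by \Cref{lemma:2}. Writing out the definitions,
\[
\hat f_s(x) - \hat f_s(y) = \lint{B_s}{}{\nu_s(z)\bigl(f(\psi(x+z)) - f(\psi(y+z))\bigr)}{z},
\]
and since $\nu_s \geq 0$ with $\lint{B_s}{}{\nu_s(z)}{z} = 1$, it suffices to show that $|f(\psi(x+z)) - f(\psi(y+z))| \leq (1+\epsilon)\nm{x-y}$ for every $z \in B_s$. As $f \in B_{\Lipo{M}}$, the left-hand side is at most $\nm{\psi(x+z) - \psi(y+z)}$, so the whole proposition comes down to bounding $\nm{\psi(x+z) - \psi(y+z)}$ by $(1+\epsilon)\nm{x-y}$, uniformly over $z \in B_s$.

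Next I would invoke \Cref{lemma:2}, after first checking that everything is defined: since $x,y \in M \cap B_{R+\delta_0}$ and $\nm z _2 \leq s \leq \delta_0$, the points $x+z$ and $y+z$ lie in $\overline{M^{2\delta_0}} \cap B_{R+2\delta_0} \subseteq \ce{M}$ by the choice of $\delta_0$, so $\psi$ is defined there and $\hat f_s(x), \hat f_s(y)$ make sense. Apply \Cref{lemma:2} with radius $R+\delta_0$ and with $\frac{\epsilon}{K^2}$ in place of $\epsilon$, obtaining some $\delta_1 > 0$, and set $\delta = \min\{\delta_0,\, \delta_1/K\}$. If $\nm{x-y} \leq \delta$ and $s \in (0,\delta]$, then $\nm{x-y}_2 \leq K\nm{x-y} \leq \delta_1$ and $\nm z _2 \leq s \leq \delta_1$, so \Cref{lemma:2} yields
\[
\nm{\psi(x+z) - \psi(y+z) - (x-y)}_2 \;\leq\; \frac{\epsilon}{K^2}\,\nm{x-y}_2 .
\]
Converting back to $\nm\cdot$ via the two inequalities defining $K$ gives $\nm{\psi(x+z) - \psi(y+z) - (x-y)} \leq K\cdot\frac{\epsilon}{K^2}\cdot K\,\nm{x-y} = \epsilon\,\nm{x-y}$, whence $\nm{\psi(x+z) - \psi(y+z)} \leq (1+\epsilon)\nm{x-y}$ by the triangle inequality. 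Combined with the first paragraph, this proves the proposition.

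There is no genuinely hard step here: all the geometric content sits in \Cref{lemma:2}, and the rest is bookkeeping — tracking the constant $K$ when passing between $\nm\cdot$ and $\nm\cdot_2$, and verifying the domain condition $x+z,\,y+z \in \ce{M}$. The one mildly delicate point is that the proposition is phrased in terms of $\nm\cdot$ whereas \Cref{lemma:2} is phrased in terms of $\nm\cdot_2$; this is precisely why one takes $\delta = \delta_1/K$ rather than $\delta_1$, and feeds the sharper tolerance $\frac{\epsilon}{K^2}$ (rather than $\epsilon$) into \Cref{lemma:2}.
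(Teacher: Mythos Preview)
Your proof is correct and follows essentially the same route as the paper's: both arguments reduce to bounding $\nm{\psi(x+z)-\psi(y+z)}$ inside the integral, invoke \Cref{lemma:2} with tolerance $\epsilon/K^2$ and radius $R+\delta_0$, and then convert between $\nm\cdot_2$ and $\nm\cdot$ using the constant $K$. The only cosmetic difference is that the paper absorbs the factor $K$ directly into the $\delta$ produced by \Cref{lemma:2} (requiring the estimate for $\nm{x-y}_2\leq K\delta$), whereas you obtain $\delta_1$ first and then set $\delta=\min\{\delta_0,\delta_1/K\}$; this is immaterial.
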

	\begin{proof}
		Let $\epsilon > 0$. According to \Cref{lemma:2} we can choose $\delta \in (0,\delta_0]$ such that $$\nm{\psi(x+z) - \psi(y+z) - (x-y)}_2 \leq \frac{\epsilon}{K^2} \nm{x-y}_2$$ for all $x,y\in M\cap B_{R+\delta_0}, \nm{x-y}_2 \leq K \delta$ and $z\in\R^N, \nm z _2 \leq \delta$. Let $s \in (0,\delta]$ and pick $x,y\in M\cap B_{R+\delta_0}, \nm{x-y}\leq\delta$. We have $\nm{x-y}_2 \leq K\delta$ and if $\nm z _2 \leq s$ then
		\begin{align*}
		\nm{\psi(x+z) - \psi(y+z) - (x-y)} &\leq K\nm{\psi(x+z) - \psi(y+z) - (x-y)}_2 \\&\leq \frac{\epsilon}{K} \nm{x-y}_2 \leq \epsilon \nm{x-y}.
		\end{align*}
		Hence $\nm{\psi(x+z) - \psi(y+z)} \leq (1+\epsilon)\nm{x-y}$.
		
		We conclude that 
		\begin{align*}
		|\hat f _s(x) - \hat f _s(y)| &\leq \lint{B_s}{}{\nu_s(z) |f(\psi(x+z)) - f(\psi(y+z))|}{z} \\&\leq \lint{B_s}{}{\nu_s(z) \nm{\psi(x+z) - \psi(y+z)}}{z} \leq (1+\epsilon)\nm{x-y}. \tag*{\qedhere}
		\end{align*}
	\end{proof}
	
	Next, we make an estimate in the case where points are far apart. Let $L \geq 1$ be the Lipschitz constant of the restriction of $\psi$ to $\overline{M^{2\delta_0}} \cap B_{R+2\delta_0}$, with respect to $\nm\cdot _2$.
	
	\begin{prop}\label{prop:xyfar}
		Let $\epsilon,\delta > 0$. Then $$|\hat{f}_s(x) - \hat{f}_s(y)| \leq (1+\epsilon) \nm{x-y}$$ whenever $x,y\in M\cap B_{R+\delta_0}, \nm{x-y}\geq\delta$ and $s \in \left(0,\min\left( \frac{\epsilon\delta}{2LK}, \delta_0 \right)\right]$.
	\end{prop}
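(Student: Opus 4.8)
The plan is to exploit the fact that $x$ and $y$ are now far apart, which lets us get away with a crude triangle-inequality estimate; in contrast to \Cref{prop:xyclose}, no use of the submanifold structure (i.e.\ \Cref{lemma:1,lemma:2}) is needed here, only the Lipschitz continuity of $\psi$ on the relevant compact set. First I would reduce to a pointwise bound on the integrand. Since
\[
\hat f_s(x) - \hat f_s(y) = \lint{B_s}{}{\nu_s(z)\bigl(f(\psi(x+z)) - f(\psi(y+z))\bigr)}{z},
\]
$\nu_s \geq 0$ with $\lint{B_s}{}{\nu_s(z)}{z}=1$, and $\Lip(f)\leq 1$, we obtain
\[
|\hat f_s(x) - \hat f_s(y)| \;\leq\; \lint{B_s}{}{\nu_s(z)\,\nm{\psi(x+z)-\psi(y+z)}}{z},
\]
so it suffices to prove $\nm{\psi(x+z)-\psi(y+z)} \leq (1+\epsilon)\nm{x-y}$ for every $z$ with $\nm z_2 \leq s$.

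Fix such a $z$. Because $x \in M \cap B_{R+\delta_0}$ and $\nm z_2 \leq s \leq \delta_0$, the point $x+z$ lies in $\overline{M^{2\delta_0}} \cap B_{R+2\delta_0}$, and likewise $y+z$; on this set $\psi$ is $L$-Lipschitz with respect to $\nm\cdot_2$. Since $x,y\in M$ gives $\psi(x)=x$ and $\psi(y)=y$, this yields $\nm{\psi(x+z)-x}_2 \leq L\nm z_2 \leq Ls$ and $\nm{\psi(y+z)-y}_2 \leq Ls$, hence, passing to $\nm\cdot$ via $\nm\cdot \leq K\nm\cdot_2$, $\nm{\psi(x+z)-x} \leq KLs$ and $\nm{\psi(y+z)-y} \leq KLs$. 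By the triangle inequality,
\[
\nm{\psi(x+z)-\psi(y+z)} \;\leq\; \nm{x-y} + 2KLs.
\]

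Finally, since $s \leq \frac{\epsilon\delta}{2LK}$ we have $2KLs \leq \epsilon\delta \leq \epsilon\nm{x-y}$, using $\nm{x-y}\geq\delta$; therefore $\nm{\psi(x+z)-\psi(y+z)} \leq (1+\epsilon)\nm{x-y}$, and substituting this into the integral bound above completes the proof. There is no genuine obstacle here: the only steps needing a modicum of care are verifying that $x+z$ and $y+z$ remain inside $\overline{M^{2\delta_0}} \cap B_{R+2\delta_0}$ (so that the Lipschitz constant $L$ of $\psi$ applies), and keeping track of the conversions between $\nm\cdot$ and $\nm\cdot_2$ through the constant $K$.
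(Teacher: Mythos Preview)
Your proof is correct and is essentially the same as the paper's: both hinge on the estimate $\nm{\psi(x+z)-x}\leq KLs$ (from $\psi(x)=x$ and the $L$-Lipschitz bound on $\overline{M^{2\delta_0}}\cap B_{R+2\delta_0}$), followed by the triangle inequality and $2KLs\leq\epsilon\delta\leq\epsilon\nm{x-y}$. The only cosmetic difference is that the paper first bounds $|\hat f_s(x)-f(x)|\leq KLs$ and then splits $|\hat f_s(x)-\hat f_s(y)|$ through $f(x)$ and $f(y)$, whereas you carry out the same triangle-inequality split inside the integrand before integrating.
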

	\begin{proof}
		Let $x,y\in M\cap B_{R+\delta_0}$ be such that $\nm{x-y}\geq\delta$, and let $s \in \left(0,\min\left( \frac{\epsilon\delta}{2LK}, \delta_0 \right)\right]$. We have
		\begin{align*}
		|\hat{f}_s(x) - f(x)| &\leq \lint{B_s}{}{\nu_s(z) |f(\psi(x+z)) - f(x)|}{z} \\&\leq \lint{B_s}{}{\nu_s(z) \nm{\psi(x+z) - x}}{z} \leq LKs,
		\end{align*}
		and similarly for $y$. Therefore
		\begin{align*}
		|\hat f _s (x) - \hat f _s(y)| &\leq |\hat f _s(x) - f(x)| + |f(x) - f(y)| + |f(y) - \hat f _s(y)| \\&\leq 2LKs + \nm{x-y} \leq \epsilon\delta + \nm{x-y} \leq (1+\epsilon)\nm{x-y}. \qedhere
		\end{align*}
	\end{proof}
	
	We require some estimates concerning $D\hat f _s$. In the next two lemmas, $\nm{D\nu_s (z)}_2$ and $\nm{D\hat f _s (x)}_2$ denote the norms of the operators $D\nu_s (z)$ and $D\hat f _s (x)$ with respect to $\nm\cdot _2$, respectively.
	\begin{lem}
		There exists $G>0$ (depending only on $N$) such that $$\lint{B_s}{}{\nm{D\nu_s(z)}_2}{z} \leq \frac{G}{s}$$ for all $s>0$.
	\end{lem}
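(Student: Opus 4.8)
The plan is to reduce the estimate to a single change of variables, exploiting the scaling relation $\nu_s(x) = s^{-N}\nu(x/s)$. Differentiating by the chain rule gives $D\nu_s(x) = s^{-N-1}(D\nu)(x/s)$, so that $\nm{D\nu_s(z)}_2 = s^{-N-1}\nm{(D\nu)(z/s)}_2$. Substituting $w = z/s$ in the integral over $B_s$, using $\md z = s^N\,\md w$ and the fact that this substitution maps $B_s$ bijectively onto $B_1$, we obtain
\[
\lint{B_s}{}{\nm{D\nu_s(z)}_2}{z} \;=\; s^{-N-1}\lint{B_s}{}{\nm{(D\nu)(z/s)}_2}{z} \;=\; \frac1s\lint{B_1}{}{\nm{(D\nu)(w)}_2}{w}.
\]
Setting $G := \lint{B_1}{}{\nm{(D\nu)(w)}_2}{w}$ then yields the claimed bound, provided this quantity is finite and depends only on $N$.

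To see that $G < \infty$, recall that $\nu$ is the standard mollifier, which is $C^\infty$ on all of $\R^N$: the only point requiring care is smoothness across the sphere $\nm x _2 = 1$, and this follows from the rapid decay of $\exp(1/(\nm x ^2 _2 - 1))$ together with all of its partial derivatives as $\nm x _2 \to 1^-$. Consequently $D\nu$ is continuous, hence bounded on the compact set $\overline{B_1}$, so that $G \leq \mathrm{vol}(B_1)\,\sup_{\overline{B_1}}\nm{D\nu}_2 < \infty$. Since $\nu$---and in particular the normalising constant $A$---was fixed in terms of $N$ alone, so is $G$.

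I do not anticipate any genuine obstacle here; the argument is essentially a scaling computation. The only mild subtlety is notational: $D\nu(z)$ denotes the total derivative (a $1\times N$ matrix) rather than the gradient vector, but its operator norm with respect to $\nm\cdot _2$ coincides with the Euclidean length of the gradient, so the computation above is unaffected, and the same remark will apply when this lemma is used to control $\nm{D\hat f _s(x)}_2$.
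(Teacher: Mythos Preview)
Your proof is correct. The scaling identity $D\nu_s(z)=s^{-N-1}(D\nu)(z/s)$ and the change of variables $w=z/s$ immediately give
\[
\int_{B_s}\nm{D\nu_s(z)}_2\,\md z \;=\; \frac{1}{s}\int_{B_1}\nm{D\nu(w)}_2\,\md w,
\]
and the right-hand integral is finite because $\nu$ is $C^\infty$ with compact support. This is all that the rest of the paper requires.

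The paper's proof takes a different, more explicit route: it computes $D\nu(z)$ directly from the formula for $\nu$, writes out $\nm{D\nu_s(z)}_2$ as a radial function, integrates in polar coordinates, bounds the factor $u^{N-1}$ by $1$, and evaluates the remaining one-dimensional integral in closed form to obtain the explicit constant $G=A\Gamma/\mathrm{e}$ (equivalently $G=(\mathrm{e}\int_0^1 \exp(1/(r^2-1))r^{N-1}\,\md r)^{-1}$). Your argument trades this explicit value for a cleaner scaling computation; since the exact value of $G$ is never used later, nothing is lost. In fact your $G$ equals the integral the paper computes at $s=1$ \emph{before} the step $u^{N-1}\leq 1$, so it is no larger than the paper's constant.
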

	\begin{proof}
		Set $G = (\mathrm{e}\lint{0}{1}{\exp \left(\frac{1}{r ^2 - 1}\right) r^{N-1}}{r})^{-1}$ and let $\Gamma$ denote the area of the Euclidean unit sphere $S^{N-1} \subseteq \R^N$. Given $z \in \R^N$, $\nm z _2 < 1$, we have
		\[
		D\nu(z) = -A\exp\left(\frac{1}{\nm z _2^2 -1}\right) \frac{2\ip{z}{\cdot}}{\left(\nm z _2^2 -1\right)^2},
		\]
		where $\ip{\cdot}{\cdot}$ denotes the inner product in $\R^N$. Let $s>0$. As $\nu_s(z) = \frac{1}{s^N}\nu\left(\frac{z}{s}\right)$, we obtain $D\nu_s (z) = \frac{1}{s^{N+1}} D\nu\left(\frac{z}{s}\right)$ and thus $$\nm{D\nu_s (z)}_2 = \frac{A}{s^{N+1}} \exp \left(\frac{1}{\nm{\frac{z}{s}}_2^2 -1}\right)\frac{2\nm{\frac{z}{s}}_2}{\left(\nm {\frac{z}{s}}_2^2 -1\right)^2}.$$ This function of $z$ is radially symmetric, hence
		\begin{align*}
		\lint{B_s}{}{\nm{D\nu_s (z)}_2}{z} &= \frac{A \Gamma}{s^{N+1}} \lint{0}{s}{\exp \left(\frac{1}{\left(\frac{r}{s}\right)^2  -1}\right)\frac{2\left(\frac{r}{s}\right)}{\left(\left(\frac{r}{s}\right)^2 -1\right)^2} r^{N-1}}{r} \\&\leq \frac{A \Gamma}{s} \lint{0}{1}{\exp \left(\frac{1}{u^2 -1}\right)\frac{2u}{\left(u^2 -1\right)^2}}{u} = \frac{A \Gamma}{\mathrm{e}s}.
		\end{align*}
		The result now follows from the fact that \begin{equation}\frac{1}{A} = \lint{B_1}{}{\exp \left(\frac{1}{\nm z ^2 _2 - 1}\right)}{z} = \frac{\Gamma}{\mathrm{e}G}. \tag*{\qedhere}\end{equation}
	\end{proof}
	
	\begin{lem} \label{lem:fhatiscinf}
		For $f\in\Lipo{M}$, the function $\hat f _s$ is $C^\infty$ on $M^{\delta_0} \cap \inter{B_{R+\delta_0}}$ and satisfies $$\nm{D\hat f _s (x)}_2 \leq KL\Lip(f),$$ and $$\nm{D\hat f _s (x) - D\hat f _s (y)}_2 \leq \frac{GKL}{s}\Lip(f)\nm{x-y}_2,$$ for every $x,y\in M^{\delta_0} \cap \inter{B_{R+\delta_0}}$.
	\end{lem}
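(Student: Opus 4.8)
The plan is to recognise $\hat f _s$ as an ordinary mollification of the Lipschitz function $g := f\circ\psi$ (restricted to a suitable compact set) and to read off the three assertions from elementary properties of such convolutions; the only genuine care needed is with the nested domains $\ce M$, $M^{\delta_0}$ and $B_{R+\delta_0}$. First I would record the basic Lipschitz estimate for $g$: by the choice of $\delta_0$ the compact set $\overline{M^{2\delta_0}}\cap B_{R+2\delta_0}$ lies in $\ce M$, where $\psi$ is $C^1$, and by the definition of $L$ the restriction of $\psi$ to this set is $L$-Lipschitz with respect to $\nm\cdot_2$; hence for $p,q\in\overline{M^{2\delta_0}}\cap B_{R+2\delta_0}$ we get $|g(p)-g(q)| = |f(\psi(p))-f(\psi(q))| \le \Lip(f)\nm{\psi(p)-\psi(q)} \le \Lip(f)K\nm{\psi(p)-\psi(q)}_2 \le KL\Lip(f)\nm{p-q}_2$, so $g$ is $KL\Lip(f)$-Lipschitz on this set with respect to $\nm\cdot_2$. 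I would then note the key domain containment: if $x\in M^{\delta_0}\cap\inter{B_{R+\delta_0}}$ and $0<s\le\delta_0$, then for every $z\in B_s$ one has $d(x+z,M)<2\delta_0$ and $\nm{x+z}_2<R+2\delta_0$, so $x+B_s$ is a compact subset of $\overline{M^{2\delta_0}}\cap B_{R+2\delta_0}\subseteq\ce M$; in particular $g$ is defined and bounded on $x+B_s$, and $\hat f _s(x) = \lint{B_s}{}{\nu_s(z)g(x+z)}{z} = \lint{\R^N}{}{\nu_s(z-x)g(z)}{z}$ (the integrand being supported in $x+B_s\subseteq\ce M$).

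For the smoothness, fix $x_0$ in the open set $M^{\delta_0}\cap\inter{B_{R+\delta_0}}$ and a sufficiently small ball $W$ about $x_0$ inside it; as $x$ ranges over $W$, in the representation $\hat f _s(x) = \lint{\R^N}{}{\nu_s(z-x)g(z)}{z}$ the variable $z$ stays in a fixed compact subset of $\ce M$ on which $g$ is bounded, while $\nu_s\in C^\infty$ has compact support, so every partial derivative in $x$ passes under the integral sign. Thus $\hat f _s\in C^\infty$ on $M^{\delta_0}\cap\inter{B_{R+\delta_0}}$ and, after the substitution $w=z-x$,
\[
D\hat f _s(x) = -\lint{\R^N}{}{D\nu_s(z-x)g(z)}{z} = -\lint{B_s}{}{D\nu_s(w)g(x+w)}{w}.
\]
The first derivative bound I would obtain from a global Lipschitz estimate rather than from this formula: for any $x,y\in M^{\delta_0}\cap\inter{B_{R+\delta_0}}$ and any $z\in B_s$, the points $x+z$ and $y+z$ lie in $\overline{M^{2\delta_0}}\cap B_{R+2\delta_0}$, so $|\hat f _s(x)-\hat f _s(y)| \le \lint{B_s}{}{\nu_s(z)|g(x+z)-g(y+z)|}{z} \le KL\Lip(f)\nm{x-y}_2$; since $M^{\delta_0}\cap\inter{B_{R+\delta_0}}$ is open and $\hat f _s$ is differentiable there, this forces $\nm{D\hat f _s(x)}_2 \le KL\Lip(f)$.

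Finally, for the modulus-of-continuity estimate I would use the formula for $D\hat f _s$ above: for any $x,y\in M^{\delta_0}\cap\inter{B_{R+\delta_0}}$,
\[
D\hat f _s(x)-D\hat f _s(y) = \lint{B_s}{}{D\nu_s(w)(g(y+w)-g(x+w))}{w},
\]
so that, using the $\nm\cdot_2$-Lipschitz bound on $g$ over $\overline{M^{2\delta_0}}\cap B_{R+2\delta_0}$ together with the preceding lemma,
\[
\nm{D\hat f _s(x)-D\hat f _s(y)}_2 \le KL\Lip(f)\nm{x-y}_2\lint{B_s}{}{\nm{D\nu_s(w)}_2}{w} \le \frac{GKL}{s}\Lip(f)\nm{x-y}_2 .
\]
The one point to watch — essentially the only obstacle — is that $g$ is merely Lipschitz and not $C^1$, so one must keep the derivative on the smooth, compactly supported mollifier (as in the formula for $D\hat f _s$) instead of differentiating $g$ itself; once the nested-domain bookkeeping is carried out, the rest is routine.
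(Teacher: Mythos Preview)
Your argument is correct and follows essentially the same route as the paper: both obtain the $C^\infty$ regularity and the derivative formula by putting the differentiation on the smooth mollifier $\nu_s$, both deduce $\nm{D\hat f_s(x)}_2\le KL\Lip(f)$ from the global Lipschitz estimate $|\hat f_s(x)-\hat f_s(y)|\le KL\Lip(f)\nm{x-y}_2$, and both get the second inequality by subtracting the integral representations of $D\hat f_s(x)$ and $D\hat f_s(y)$ and invoking the preceding bound $\int_{B_s}\nm{D\nu_s}_2\le G/s$. Your treatment of the nested domains is in fact a little more explicit than the paper's, which simply cites a standard mollification theorem.
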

	\begin{proof}
		By \cite{evans10}*{Appendix C.4, Theorem 6(i)} we have that $\hat f _s$ is $C^\infty$ on $M^{\delta_0} \cap \inter{B_{R+\delta_0}}$ and $$D\hat f _s (x) = \lint{B_s}{}{D\nu_s(z) f(\psi(x+z))}{z}.$$ If $x,y\in M^{\delta_0} \cap \inter{B_{R+\delta_0}}$ then $$|\hat f _s (x) - \hat f _s(y)| \leq \lint{B_s}{}{\nu_s(z) |f(\psi(x+z)) - f(\psi(y+z))|}{z} \leq KL\Lip(f)\nm{x-y}_2.$$ Therefore $$\nm{D\hat f _s(x)}_2 \leq \Lip_{\nm\cdot _2} (\hat f_s) \leq KL\Lip(f).$$ Also,
		\begin{align*}
		&\nm{D\hat f _s (x) - D\hat f _s(y)}_2 \leq \lint{B_s}{}{\nm{D\nu_s(z)}_2 |f(\psi(x+z)) - f(\psi(y+z))|}{z} \\&\leq KL\Lip(f) \lint{B_s}{}{\nm{D\nu_s(z)}_2 }{z}\nm{x-y}_2 \leq \frac{GKL}{s}\Lip(f)\nm{x-y}_2. \qedhere
		\end{align*}
	\end{proof}
	
	In order to obtain suitable finite-rank operators that witness the MAP, we make use of an interpolation process first used in \cite{apandschd} and again in \cite{ps:15}. 
	
	Fix $w\in \R^d$. We deﬁne a closed hypercube $C\subseteq \R^d$ having edge length $\delta>0$ and vertices $v_\gamma \in\R^d, \gamma\in \{0,1\}^d$, given by $v_\gamma = w + \delta\gamma$.
	Let $f$ be a real-valued function whose domain of deﬁnition includes the set of vertices of $C$. We deﬁne the `interpolation function' $\Lambda(f,C)$ on $\R^d$ by $$\Lambda(f,C)(x) = \sum_{\gamma\in\{0,1\}^{d}} \left( \prod_{i=1}^d \left(1-\gamma_i+(-1)^{\gamma_i + 1} \frac{x_i-w_i}{\delta}\right)\right) f(v_\gamma).$$ Note that $\Lambda(f,C)$ is the unique function that agrees with $f$ on the vertices of $C$ and is coordinatewise affine, i.e.~$t\mapsto \Lambda(f,C)(x_1,\ldots,x_{i-1},t,x_{i+1},\ldots,x_d)$ is affine whenever $1\leq i \leq d$.
	
	The following lemma is very similar to \cite{ps:15}*{Lemma 3.3}, with the important difference that the estimate $$\Lip_{\nm\cdot _2}((\Lambda(f,C) - f)|_C) \leq (1+\sqrt{d})\epsilon.$$ is in place of \cite{ps:15}*{Lemma 3.3 (14)}. Despite the similarity between the two results, we provide a proof for completeness and ask the reader to excuse any redundancy. 
	
	\begin{lem} \label{lm:grid}
		Let $f$ be a $C^1$ function defined on a convex neighbourhood $V$ of a given hypercube $C\subseteq (\R^{d}, \nm\cdot _2)$ as above. Let $\epsilon > 0$ and suppose that there is $\delta > 0$ such that $\nm{Df(u) - D f(u')}_2 \leq \epsilon$ if $\nm{u-u'}_2\leq \sqrt d\delta$. If $C$ has sidelength $l\leq \delta$ then $$\Lip_{\nm\cdot _2}((\Lambda(f,C) - f)|_C) \leq (1+\sqrt d)\epsilon,$$ and $$\nm{(\Lambda(f,C) - f)|_C}_\infty \leq \sqrt d\delta \Lip_{\nm\cdot _2}(f).$$ 
	\end{lem}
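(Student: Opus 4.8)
The plan is to prove the two inequalities separately, in both cases exploiting that, on $C$, $\Lambda(f,C)$ is a convex combination of the vertex values and is coordinatewise affine.

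I would first record two elementary facts about the coefficients of the interpolation formula (written with the sidelength $l$ in place of $\delta$): for $x \in C$, writing $\Lambda(f,C)(x) = \sum_{\gamma} p_\gamma(x) f(v_\gamma)$ with $p_\gamma(x) = \prod_{i=1}^{d}\big(1-\gamma_i+(-1)^{\gamma_i+1}\frac{x_i-w_i}{l}\big)$, each factor lies in $[0,1]$, so $p_\gamma(x) \geq 0$, and $\sum_{\gamma} p_\gamma \equiv 1$ since $\Lambda(\cdot,C)$ applied to the constant function $1$ is the unique coordinatewise-affine function equal to $1$ at every vertex. Hence $\Lambda(f,C)(x)$ is a convex combination of $\{f(v_\gamma)\}_\gamma$, which immediately gives the second inequality:
\[
|\Lambda(f,C)(x)-f(x)| \leq \max_{\gamma}|f(v_\gamma)-f(x)| \leq \sqrt{d}\,l\,\Lip_{\nm\cdot _2}(f) \leq \sqrt{d}\,\delta\,\Lip_{\nm\cdot _2}(f),
\]
using that $\nm{v_\gamma-x}_2 \leq \sqrt{d}\,l$ (the $\nm\cdot _2$-diameter of $C$) for all $x\in C$, together with $l \leq \delta$.

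For the first inequality I would subtract off a first-order Taylor polynomial at the vertex $w=v_{(0,\ldots,0)}$. Set $h(u) = f(w)+Df(w)(u-w)$ and $\phi = f-h$. Since $h$ is affine it is coordinatewise affine and agrees with itself at the vertices of $C$, so $\Lambda(h,C)=h$; as $\Lambda(\cdot,C)$ is linear in $f$, this gives $\Lambda(f,C)-f = \Lambda(\phi,C)-\phi$ everywhere. Now $\phi$ is $C^1$ on $V$ with $D\phi(u)=Df(u)-Df(w)$, and $\nm{u-w}_2 \leq \sqrt{d}\,l \leq \sqrt{d}\,\delta$ for $u \in C$, so the hypothesis gives $\nm{D\phi(u)}_2 \leq \epsilon$ on $C$; since $C$ is convex (being contained in the convex neighbourhood $V$) the mean value inequality yields $\Lip_{\nm\cdot _2}(\phi|_C) \leq \epsilon$. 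Differentiating the product formula for $\Lambda(\phi,C)$ and grouping the $2^d$ terms into $2^{d-1}$ pairs according to the coordinates other than $i$ gives
\[
\partial_i\Lambda(\phi,C)(x) = \sum_{\gamma'\in\{0,1\}^{d-1}} c_{\gamma'}(x)\,\frac{\phi(v_{\gamma',1})-\phi(v_{\gamma',0})}{l},
\]
where $v_{\gamma',1},v_{\gamma',0}$ are the two vertices of $C$ differing only in the $i$-th coordinate (so $\nm{v_{\gamma',1}-v_{\gamma',0}}_2=l$) and the $c_{\gamma'}$ are, on $C$, nonnegative and sum to $1$ (the same identity as above, now in dimension $d-1$). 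Hence $|\partial_i\Lambda(\phi,C)(x)| \leq \Lip_{\nm\cdot _2}(\phi|_C) \leq \epsilon$ for all $i$ and all $x \in C$, so $\nm{D\Lambda(\phi,C)(x)}_2 \leq \sqrt{d}\,\epsilon$ and $\Lip_{\nm\cdot _2}(\Lambda(\phi,C)|_C) \leq \sqrt{d}\,\epsilon$ by the mean value inequality again. Combining with $\Lip_{\nm\cdot _2}(\phi|_C) \leq \epsilon$ through the triangle inequality,
\[
\Lip_{\nm\cdot _2}\big((\Lambda(f,C)-f)|_C\big) = \Lip_{\nm\cdot _2}\big((\Lambda(\phi,C)-\phi)|_C\big) \leq \sqrt{d}\,\epsilon + \epsilon = (1+\sqrt{d})\epsilon.
\]

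I do not expect a genuine obstacle here; the argument is essentially bookkeeping with the multilinear interpolation formula. The points that need care are the identity $\sum_\gamma p_\gamma \equiv 1$ together with the positivity of the $p_\gamma$ on $C$, the computation of $\partial_i\Lambda(\phi,C)$, and tracking the factor $\sqrt{d}$, which enters once as the $\nm\cdot _2$-diameter $\sqrt{d}\,l$ of $C$ (invoking the modulus-of-continuity hypothesis via $l \leq \delta$) and once in passing from the uniform bound $\epsilon$ on the $d$ partial derivatives to the bound $\sqrt{d}\,\epsilon$ on the Euclidean gradient norm — all the while keeping every Lipschitz constant and operator norm measured with respect to $\nm\cdot _2$.
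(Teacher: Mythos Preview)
Your argument is correct, and the second (uniform) inequality is handled exactly as in the paper. For the Lipschitz inequality, however, you take a genuinely different route. The paper works directly with $f$: it invokes the identity from \cite{ps:15} expressing $D\Lambda(f,C)(z)$ as a convex combination, indexed by the vertices $v_\gamma$, of the $d$-vectors of edge-wise finite differences of $f$ at $v_\gamma$; it then shows each such vector is within $\sqrt d\,\epsilon$ of $Df(v_\gamma)$ via the first-order Taylor estimate, and finally uses $\nm{Df(v_\gamma)-Df(z)}_2\leq\epsilon$. You instead subtract the affine map $h(u)=f(w)+Df(w)(u-w)$, observe that $\Lambda(h,C)=h$ so that $\Lambda(f,C)-f=\Lambda(\phi,C)-\phi$ with $\phi=f-h$, and then bound $\Lip_{\nm\cdot_2}(\phi|_C)\leq\epsilon$ and $\Lip_{\nm\cdot_2}(\Lambda(\phi,C)|_C)\leq\sqrt d\,\epsilon$ separately. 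Your approach is a little more elementary and self-contained (you compute $\partial_i\Lambda(\phi,C)$ directly rather than citing the formula), and the linearization trick cleanly isolates where each of the two summands $\epsilon$ and $\sqrt d\,\epsilon$ comes from; the paper's approach, on the other hand, yields the sharper pointwise bound $\nm{D\Lambda(f,C)(z)-Df(z)}_2\leq(1+\sqrt d)\epsilon$ in one stroke without a triangle-inequality split.
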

	\begin{proof}
		First we show that for $u\in V, h\in\R^d$ such that $u+h\in V$ and $\nm h _2 \leq \sqrt d\delta$ we have 
		\begin{align}\label{eqeq}
		|f(u+h) - f(u) - Df(u)(h)| \leq \epsilon \nm h _2. 
		\end{align} 
		For such $u$ and $h$, using the Mean Value Theorem we can find a vector $u'$ on the line segment between $u$ and $u+h$ such that $f(u+h) - f(u) = Df(u')(h)$. Then $\nm{u-u'}_2 \leq \sqrt d\delta$ and 
		\begin{align*}
		|f(u+h) - f(u) - Df(u)(h)| &= |Df(u')(h) - Df(u)(h)| \\&\leq \nm{Df(u') - Df(u)}_2 \nm h _2 \leq \epsilon \nm h _2.
		\end{align*}
		
		Now let $z\in \inter{C}$. The first equality on \cite{ps:15}*{p.41} is 
		\begin{align}\label{eq:lincomb}
		D\Lambda(f,C)(z) = \sum_{\gamma\in\{0,1\}^{d}} c_\gamma \left(\frac{f(v_\gamma + (-1)^{\gamma_j}l e_j) - f(v_\gamma)}{(-1)^{\gamma_j} l} \right)_{j=1}^d,
		\end{align} where $$c_\gamma =  \prod_{i=1}^d \left(1-\gamma_i+(-1)^{\gamma_i + 1} \frac{z_i-w_i}{l}\right) \geq 0, \sum_{\gamma\in\{0,1\}^{d}} c_\gamma = 1.$$ 
		
		For every $\gamma\in \{0,1\}^d$ we have
		\begin{align*}
		&\nm{ \left(\frac{f(v_\gamma + (-1)^{\gamma_j}l e_j) - f(v_\gamma)}{(-1)^{\gamma_j} l} \right)_{j=1}^d - Df(v_\gamma) }_2 \\=& \nm{ \left(\frac{f(v_\gamma + (-1)^{\gamma_j}l e_j) - f(v_\gamma) - Df(v_\gamma)((-1)^{\gamma_j} l e_j)}{(-1)^{\gamma_j} l} \right)_{j=1}^d }_2 \\  \leq& \frac{\sqrt d}{l} \nm{ ( f(v_\gamma + (-1)^{\gamma_j}l e_j) - f(v_\gamma) - Df(v_\gamma)((-1)^{\gamma_j} l e_j))_{j=1}^d }_\infty \\ \leq& \frac{\sqrt d \epsilon}{l} \nm{(-1)^{\gamma_j}l e_j}_2 = \sqrt d \epsilon \tag*{by \eqref{eqeq}}.
		\end{align*}
		Therefore by \eqref{eq:lincomb},
		\begin{align*}
		&\nm{D\Lambda(f,C)(z) - Df(z)}_2 \\ \leq& \nm{D\Lambda(f,C)(z) - \sum_{\gamma\in\{0,1\}} c_\gamma Df(v_\gamma) }_2 + \nm{\sum_{\gamma\in\{0,1\}} c_\gamma Df(v_\gamma) - Df(z)}_2 \\ \leq& \sqrt d \epsilon + \epsilon = (1+\sqrt d)\epsilon
		\end{align*}
		since on the second line we have $\nm{v_\gamma-z}_2 \leq \sqrt d l \leq \sqrt d \delta$. Therefore $$\Lip_{\nm\cdot _2}((\Lambda(f,C)-f)|_{\inter{C}}) \leq (1+\sqrt d)\epsilon,$$ and due to continuity this extends to $C$, as required.
		
		For the uniform norm, we have 
		\begin{align*}
		|\Lambda(f,C)(z) - f(z)| &= \left| \sum_{\gamma\in\{0,1\}^{d}} c_\gamma f(v_\gamma) - f(z) \right| \leq \sum_{\gamma\in\{0,1\}^{d}} c_\gamma |f(v_\gamma) - f(z)| \\&\leq \sum_{\gamma\in\{0,1\}^{d}} c_\gamma \Lip_{\nm\cdot _2}(f)\nm{v_\gamma - z}_2 \leq \sqrt d\delta \Lip_{\nm\cdot _2}(f). \tag*{\qedhere}
		\end{align*}
	\end{proof}
	
	Next we prove a lemma about gluing functions together using a partition of unity. This lemma is again similar to \cite{ps:15}*{Lemma 2.7}.
	
	\begin{lem} \label{lm:gluing}
		Let $U$ be a subset of $\R^N$ and $f\colon U\to\R$ be a Lipschitz function. Suppose that $\epsilon>0$, $(U_i)_{i=1}^m$ is an open cover of $U$, $(\alpha_i)_{i=1}^m$ a partition of unity subordinate to $(U_i)_{i=1}^m$ consisting of $H$-Lipschitz functions, and $f_i\colon U_i\to \R, 1\leq i\leq m$ satisfy $\nm{f_i-f|_{U_i}}_\infty, \Lip_{\nm\cdot}(f_i-f|_{U_i}) < \epsilon$. If $g = \sum_{i=1}^m \alpha_i f_i$ then $\nm{g-f}_\infty \leq \epsilon$ and $\Lip_{\nm\cdot}(g-f) \leq (1+mH)\epsilon$.
	\end{lem}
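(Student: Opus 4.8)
The plan is to reduce everything to the single partition-of-unity identity $\sum_{i=1}^m \alpha_i \equiv 1$ on $U$. Writing $h_i = f_i - f|_{U_i}$, the hypotheses give $\nm{h_i}_\infty < \epsilon$ and $\Lip_{\nm\cdot}(h_i) < \epsilon$; moreover, since $(\alpha_i)_{i=1}^m$ is subordinate to $(U_i)_{i=1}^m$ we have $\supp\alpha_i\subseteq U_i$, so $\alpha_i$ vanishes on $U\setminus U_i$ and each product $\alpha_i h_i$ extends by zero to a (Lipschitz) function on all of $U$. Using $\sum_i\alpha_i\equiv 1$ we may rewrite $g-f = \sum_{i=1}^m \alpha_i(f_i - f|_{U_i}) = \sum_{i=1}^m \alpha_i h_i$ on $U$, and the sup-norm bound is then immediate: for $x\in U$, $|(g-f)(x)| \le \sum_i \alpha_i(x)|h_i(x)| \le \epsilon\sum_i\alpha_i(x) = \epsilon$.

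For the Lipschitz bound I would fix $x,y\in U$ and estimate $|(\alpha_i h_i)(x) - (\alpha_i h_i)(y)|$ for each $i$ by cases. When both $x,y\in U_i$, adding and subtracting $\alpha_i(x)h_i(y)$ gives
\[
|(\alpha_i h_i)(x) - (\alpha_i h_i)(y)| \le \alpha_i(x)\Lip_{\nm\cdot}(h_i)\nm{x-y} + |\alpha_i(x) - \alpha_i(y)|\,\nm{h_i}_\infty \le \bigl(\alpha_i(x) + H\bigr)\epsilon\nm{x-y}.
\]
If instead $y\notin U_i$, then $\alpha_i(y)=0=(\alpha_i h_i)(y)$, and the difference is either $0$ (when also $x\notin U_i$) or equals $\alpha_i(x)|h_i(x)| = |\alpha_i(x)-\alpha_i(y)|\,|h_i(x)| \le H\epsilon\nm{x-y}$ (when $x\in U_i$); in both sub-cases the same bound $\bigl(\alpha_i(x)+H\bigr)\epsilon\nm{x-y}$ holds. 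Summing over $i=1,\dots,m$ and using $\sum_i\alpha_i(x)=1$ yields $|(g-f)(x)-(g-f)(y)| \le \epsilon\nm{x-y} + mH\epsilon\nm{x-y} = (1+mH)\epsilon\nm{x-y}$, which is the claim.

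There is no substantial obstacle here; the only delicate point is the bookkeeping at indices $i$ for which exactly one of $x,y$ lies in $U_i$, where one exploits that $\alpha_i$ vanishes outside its support both to make sense of $\alpha_i h_i$ globally and to dominate $\alpha_i(x)$ by $H\nm{x-y}$. All Lipschitz constants and norms above are taken with respect to $\nm\cdot$.
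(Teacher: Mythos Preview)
Your argument is correct and is essentially the same as the paper's: both write $g-f=\sum_i \alpha_i(f_i-f)$, bound the sup-norm via $\sum_i\alpha_i=1$, and for the Lipschitz estimate split each summand according to whether $x$ and $y$ lie in $U_i$, using $\alpha_i(x)=|\alpha_i(x)-\alpha_i(y)|\le H\nm{x-y}$ at indices where only one of the points belongs to $U_i$. The only cosmetic gap is that you treat ``$y\notin U_i$'' but not explicitly ``$x\notin U_i,\ y\in U_i$''; this case follows by symmetry and your bound $(\alpha_i(x)+H)\epsilon\nm{x-y}$ (with $\alpha_i(x)=0$) still applies.
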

	\begin{proof}
		For any $x\in U$ we have $$|g(x)-f(x)| = \left|\sum_{i=1}^m \alpha_i(x) (f_i(x) - f(x))\right| \leq \epsilon \sum_{i=1}^m \alpha_i(x) = \epsilon,$$ and therefore $\nm{g-f}_\infty \leq \epsilon$. Now pick different $x,y\in U$. Let $A = \{i : x\in U_i\}$ and $B = \{i : y\in U_i\}$. We have 
		\begin{align*}
		|g(x)-f(x) - (g(y)-f(y))| =& \left|\sum_{i\in A} \alpha_i(x) (f_i(x)-f(x)) - \sum_{i\in B} \alpha_i(y) (f_i(y)-f(y))\right| \\
		\leq& \sum_{i\in A\cap B} |\alpha_i(x) (f_i(x)-f(x)) - \alpha_i(y) (f_i(y)-f(y))| \\
		+& \sum_{i\in A\setminus B} \alpha_i(x) |f_i(x)-f(x)| + \sum_{i\in B\setminus A} \alpha_i(y) |f_i(y)-f(y)| \\
		\leq& \sum_{i\in A\cap B} \alpha_i(x) |f_i(x)-f(x) - (f_i(y)-f(y))| \\
		+& \sum_{i\in A\cap B} |(\alpha_i(x)-\alpha_i(y))(f_i(y)-f(y))| \\
		+& \epsilon \left(\sum_{i\in A\setminus B} \alpha_i(x) + \sum_{i\in B\setminus A} \alpha_i(y)\right) \\
		\leq& \epsilon\nm{x-y} + \card(A\cap B)\epsilon H\nm{x-y} \\
		+& \epsilon \left(\sum_{i\in A\setminus B} |\alpha_i(x)-\alpha_i(y)| +  \sum_{i\in B\setminus A} |\alpha_i(y)-\alpha_i(x)|\right) \\
		\leq& (1+mH)\epsilon\nm{x-y},
		\end{align*}
		where $\card(A\cap B)$ is the cardinality of $A\cap B$. Therefore $\Lip_{\nm\cdot}(g-f) \leq (1+mH)\epsilon$.
	\end{proof}
	
	Finally, we define a `flattening' operator $\Phi$, following an idea taken from \cite{godefroykalton}*{Proposition 5.1}. Let $R>0$ be fixed.
	
	Let $\mu\colon [0, \infty) \to [0,1]$ be defined by
	\begin{equation}
	\mu(t) =
	\begin{cases}
	1, & \text{if } t \leq R \\
	(\log{R})^{-1} (2\log{R} - \log{t}), & \text{if } R\leq t\leq R^2 \\
	0 & \text{otherwise. } \\
	\end{cases}
	\end{equation}
	
	Denote by $\Lipo{M\cap B_{R^2}}$ the space of all Lipschitz functions on $M\cap B_{R^2}$ vanishing at $0$, equipped with the norm $\Lip(\cdot)$. Given $f\in\Lipo{M\cap B_{R^2}}$ and $x\in M$, we define
	\begin{equation*}
	\Phi(f)(x) =
	\begin{cases}
	\mu(\nm{x}_2)f(x), & \text{if } x\in B_{R^2} \\
	0 & \text{otherwise. } \\
	\end{cases}
	\end{equation*}
	
	It is clear that $\Phi(f)$ agrees with $f$ on $M\cap B_R$ and vanishes outside $B_{R^2}$. In particular, $\Phi(f)(0) = 0$.
	\begin{prop} \label{prop:flat}
		The map $\Phi\colon \Lipo{M\cap B_{R^2}} \to \Lipo{M}$ is a bounded operator satisfying $$\nm{\Phi} \leq 1+\frac{K^2}{\log{R}}.$$
	\end{prop}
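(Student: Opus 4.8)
The plan is to reduce $\nm{\Phi}\le 1+\tfrac{K^2}{\log R}$ to a single pointwise Lipschitz estimate for $\Phi(f)$, and to carry out that estimate by passing from $\mu$ to $\mu\circ\exp$, which is genuinely Lipschitz. First I would note that $\Phi$ is linear and that $\Phi(f)(0)=0$, so it suffices to prove $\Lip(\Phi(f))\le(1+\tfrac{K^2}{\log R})\Lip(f)$ for each $f\in\Lipo{M\cap B_{R^2}}$; we may assume $R>1$, as the statement is vacuous otherwise. To sidestep a case analysis according to whether $x$ and $y$ lie inside or outside $B_{R^2}$, I would first extend $f$ to some $\tilde f\in\Lipo{M}$ with $\Lip(\tilde f)=\Lip(f)$ (a McShane extension with respect to $\nm\cdot$ does this, and keeps $\tilde f(0)=f(0)=0$ since $0\in M\cap B_{R^2}$). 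Because $\mu$ vanishes on $[R^2,\infty)$, we then have the clean identity $\Phi(f)(x)=\mu(\nm{x}_2)\,\tilde f(x)$ for \emph{every} $x\in M$ (both sides vanish when $x\notin B_{R^2}$).

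The technical core is a modulus-of-continuity statement for $\mu$. I would observe that $g(u):=\mu(\mathrm e^u)$ is continuous and piecewise affine on $\R$, equal to $1$ on $(-\infty,\log R]$, to $(\log R)^{-1}(2\log R-u)$ on $[\log R,2\log R]$, and to $0$ on $[2\log R,\infty)$; its slopes lie in $\{0,-(\log R)^{-1}\}$, so $g$ is $(\log R)^{-1}$-Lipschitz, giving
\[
|\mu(s)-\mu(t)|\ \le\ \frac{1}{\log R}\,|\log s-\log t|\qquad(s,t>0).
\]
Combined with $\log(1+v)\le v$, this yields for $x,y\in\R^N$ with $0<\nm{y}_2\le\nm{x}_2$ that
\[
\bigl|\mu(\nm{x}_2)-\mu(\nm{y}_2)\bigr|\,\nm{y}_2\ \le\ \frac{\nm{y}_2}{\log R}\log\frac{\nm{x}_2}{\nm{y}_2}\ \le\ \frac{\nm{x}_2-\nm{y}_2}{\log R}\ \le\ \frac{\nm{x-y}_2}{\log R},
\]
and the same bound holds trivially when $\nm{y}_2=0$. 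This inequality is what both produces the factor $\tfrac1{\log R}$ and absorbs the possibly large factor $\nm{y}_2$.

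Finally I would assemble the estimate. Given $x,y\in M$, by symmetry assume $\nm{y}_2\le\nm{x}_2$, and write $\Phi(f)(x)-\Phi(f)(y)=\mu(\nm{x}_2)\bigl(\tilde f(x)-\tilde f(y)\bigr)+\bigl(\mu(\nm{x}_2)-\mu(\nm{y}_2)\bigr)\tilde f(y)$. Using $0\le\mu\le 1$, the bound $|\tilde f(y)|=|\tilde f(y)-\tilde f(0)|\le\Lip(f)\nm{y}\le K\Lip(f)\nm{y}_2$, the displayed inequality above, and $\nm{x-y}_2\le K\nm{x-y}$, the two summands contribute at most $\Lip(f)\nm{x-y}$ and $\tfrac{K^2}{\log R}\Lip(f)\nm{x-y}$ respectively, which gives $\Lip(\Phi(f))\le(1+\tfrac{K^2}{\log R})\Lip(f)$, as required. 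The one genuine obstacle is the behaviour at points of large Euclidean norm, where crude estimates inflate the constant by a factor of order $R$; funnelling everything through $\tilde f$ and through the logarithmic estimate for $\mu$ is exactly what keeps the final constant at $1+\tfrac{K^2}{\log R}$.
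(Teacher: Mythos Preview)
Your argument is correct. Both your proof and the paper's share the same skeleton: extend $f$ by McShane to avoid case distinctions, use a product-rule decomposition of $\mu(\nm{\cdot}_2)\tilde f$, and exploit the fact that $t\,|\mu'(t)|\le(\log R)^{-1}$ (equivalently, that $\mu\circ\exp$ is $(\log R)^{-1}$-Lipschitz) to cancel the growth of $|\tilde f(y)|\lesssim K\Lip(f)\nm{y}_2$. The execution differs: the paper extends $f$ to all of $\R^N$, invokes Rademacher's theorem, and bounds $\nm{Dg(x)}$ pointwise via the differential product rule; you extend only to $M$, work entirely with finite differences, and obtain the key inequality $|\mu(\nm{x}_2)-\mu(\nm{y}_2)|\,\nm{y}_2\le\nm{x-y}_2/\log R$ from $\log(1+v)\le v$. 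Your route is slightly more elementary in that it avoids Rademacher, while the paper's derivative computation is a touch more compact once that machinery is in place.
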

	\begin{proof}
		Let $f\in B_{\Lipo{M\cap B_{R^2}}}$. Extend $f$ to $\R^N$ while preserving its Lipschitz constant, by McShane's Extension Theorem \cite{weaver}*{Theorem 1.33} and define $g(x)=\mu(\nm x _2)f(x), x\in \R^N$. It suffices to prove that $\Lip(g) \leq 1+\frac{K^2}{\log{R}}$.  By Rademacher's Theorem, it is enough to prove $\nm{Dg(x)} \leq 1+\frac{K^2}{\log{R}}$ whenever $x$ is a point of differentiability of both $\mu \circ \nm\cdot _2$ and $f$. Given such $x$, if $\nm x _2 < R$ or $\nm x _2 > R^2$ then $Dg(x) = Df(x)$ or $Dg(x) = 0$, respectively. If $R\leq \nm x _2 \leq R^2$ then 
		\begin{align*}
		\nm{Dg(x)} &= \nm{\mu'(\nm x _2)\frac{\ip{x}{\cdot}}{\nm x _2} f(x) + \mu(\nm x _2)Df(x)} \\&\leq \nm{\mu'(\nm x _2)\frac{\ip{x}{\cdot}}{\nm x _2} f(x)} + \nm{\mu(\nm x _2)Df(x)} \\&\leq 1 + K\nm{\mu'(\nm x _2)\frac{\ip{x}{\cdot}}{\nm x _2} f(x)}_2 = 1 + \frac{K|f(x)|}{\nm x _2 \log R} \leq 1 + \frac{K^2}{\log R}. \qedhere
		\end{align*}
	\end{proof}
	
	\section{Proof of \Cref{thm_main}}\label{sect_proof}
	
	We will build a sequence of operators in the proof of \Cref{thm_main}. Before giving the proof, we present three results that supply the components needed to assemble these operators.
	
	First we construct `mollifier' operators $S_n$ defined on $\Lipo{M}$. Fix an $n\in\N$ and a $\delta_n>0$ such that $\overline{M^{2\delta_n}} \cap B_{n^2+2\delta_n} \subseteq \mathcal{E}(M).$ Let $L_n \geq 1$ be the Lipschitz constant of the restriction of $\psi$ to $\overline{M^{2\delta_n}} \cap B_{n^2+2\delta_n}$, with respect to $\nm{\cdot}_2$. Recall the functions $\hat{f}_s$ defined before \Cref{prop:xyclose}.
	
	\begin{thm} \label{thm:smoothingext}
		There exists $s_n\in (0,\delta_n]$ such that the linear map $S_n\colon\Lipo{M}\to\Lipo{M\cap B_{n^2+\delta_n}}$, $$S_n(f)(x) = \hat{f} _{s_n}(x) - \hat{f} _{s_n}(0),$$ satisfies $\nm{S_n} \leq 1+\frac{1}{n}$ and $\nm{S_n(f)-f|_{M\cap B_{n^2+\delta_n}}}_\infty \leq \frac{1}{n}$ for all $f\in B_{\Lipo{M}}$. Moreover, if $(f^{(k)})_k$ is a bounded sequence of functions in $\Lipo{M}$ converging pointwise to $f\in\Lipo{M}$ then $S_n(f_k) \to S_n(f)$ pointwise as $k\to\infty$.
	\end{thm}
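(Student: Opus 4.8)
The map $S_n$ is linear because $f\mapsto\hat f_s$ is linear (the integrand $\nu_s(z)f(\psi(x+z))$ depends linearly on $f$) and so is $f\mapsto\hat f_s(0)$; moreover $S_n(f)(0)=\hat f_{s_n}(0)-\hat f_{s_n}(0)=0$, so $S_n(f)$ genuinely lies in $\Lipo{M\cap B_{n^2+\delta_n}}$ as soon as we know it is Lipschitz. Since $S_n(f)(x)-S_n(f)(y)=\hat f_{s_n}(x)-\hat f_{s_n}(y)$, any Lipschitz bound on $\hat f_{s_n}$ over $M\cap B_{n^2+\delta_n}$ transfers verbatim to $S_n(f)$, and by homogeneity in $f$ it yields the operator norm bound. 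The plan is therefore to choose $s_n\in(0,\delta_n]$ small enough that three things hold simultaneously for every $f\in B_{\Lipo M}$: the Lipschitz constant of $\hat f_{s_n}$ on $M\cap B_{n^2+\delta_n}$ is at most $1+\tfrac1n$; the sup-distance of $\hat f_{s_n}$ from $f$ on that set is at most $\tfrac1{2n}$; and, combined with the matching estimate at the base point $0$, the sup-distance of $S_n(f)$ from $f|_{M\cap B_{n^2+\delta_n}}$ is at most $\tfrac1n$.

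For the Lipschitz bound I would apply \Cref{prop:xyclose} with $R=n^2$ (so that $\delta_0$, $L$ in that section become $\delta_n$, $L_n$) and $\epsilon=\tfrac1n$ to obtain a threshold $\delta'\in(0,\delta_n]$ with $|\hat f_s(x)-\hat f_s(y)|\le(1+\tfrac1n)\nm{x-y}$ whenever $x,y\in M\cap B_{n^2+\delta_n}$, $\nm{x-y}\le\delta'$ and $s\in(0,\delta']$. Then \Cref{prop:xyfar} with $\epsilon=\tfrac1n$ and $\delta=\delta'$ gives the same inequality in the complementary range $\nm{x-y}\ge\delta'$ provided $s\le\min(\tfrac{\delta'}{2nL_nK},\delta_n)$. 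Since $2nL_nK\ge1$ we have $\tfrac{\delta'}{2nL_nK}\le\delta'\le\delta_n$, so it suffices to require $s_n\le\tfrac{\delta'}{2nL_nK}$; this forces $\Lip\big(\hat f_{s_n}|_{M\cap B_{n^2+\delta_n}}\big)\le1+\tfrac1n$ for all $f\in B_{\Lipo M}$, hence $\nm{S_n}\le1+\tfrac1n$.

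For the uniform estimate I would reuse the first display in the proof of \Cref{prop:xyfar}: for $x\in M\cap B_{n^2+\delta_n}$ and $f\in B_{\Lipo M}$, using $\psi(x)=x$, the fact that $\psi$ is $L_n$-Lipschitz relative to $\nm\cdot_2$ on $\overline{M^{2\delta_n}}\cap B_{n^2+2\delta_n}$, and $\nm\cdot\le K\nm\cdot_2$, one gets $|\hat f_{s_n}(x)-f(x)|\le\int_{B_{s_n}}\nu_{s_n}(z)\nm{\psi(x+z)-x}\,\md z\le KL_ns_n$; the same bound holds at $0\in M$. Hence $|S_n(f)(x)-f(x)|\le|\hat f_{s_n}(x)-f(x)|+|\hat f_{s_n}(0)-f(0)|\le 2KL_ns_n$, and imposing the further constraint $s_n\le\tfrac1{2nKL_n}$ (so that, e.g., $s_n=\tfrac{\min(\delta',1)}{2nKL_n}$ meets all requirements at once) gives $\nm{S_n(f)-f|_{M\cap B_{n^2+\delta_n}}}_\infty\le\tfrac1n$.

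For the pointwise-continuity clause, write $S_n(f)(x)=\hat f_{s_n}(x)-\hat f_{s_n}(0)=\int_{B_{s_n}}\nu_{s_n}(z)\big(f(\psi(x+z))-f(\psi(z))\big)\,\md z$ for $x\in M\cap B_{n^2+\delta_n}$. If $(f^{(k)})_k\subseteq\Lipo M$ is bounded, say $\Lip(f^{(k)})\le c$, and $f^{(k)}\to f$ pointwise, then for fixed $x$ the integrands $\nu_{s_n}(z)\big(f^{(k)}(\psi(x+z))-f^{(k)}(\psi(z))\big)$ converge pointwise in $z$ to the corresponding expression with $f$, and are dominated by the integrable function $z\mapsto\nu_{s_n}(z)\,cKL_n\nm x_2$ (using $\psi(x+z),\psi(z)$ in the compact set above); the Dominated Convergence Theorem then yields $S_n(f^{(k)})(x)\to S_n(f)(x)$. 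The whole argument is essentially assembly: the technical work lies in the already-proved \Cref{prop:xyclose} and \Cref{prop:xyfar}, and the only point needing care is that the ``far'' estimate makes $s_n$ depend on the $n$-dependent quantities $\delta'$ and $L_n$ — harmless, since $n$ is fixed throughout.
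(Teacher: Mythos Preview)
Your proposal is correct and follows essentially the same approach as the paper: apply \Cref{prop:xyclose} with $\epsilon=\tfrac1n$ to handle nearby points, then \Cref{prop:xyfar} for far points, choose $s_n$ small enough to satisfy both thresholds and the uniform bound $KL_ns_n\le\tfrac1{2n}$, and invoke the Dominated Convergence Theorem for the pointwise-continuity clause. Your explicit choice $s_n=\min(\delta',1)/(2nKL_n)$ is slightly more careful than the paper's $s_n=\delta/(2nKL_n)$, since it guarantees $KL_ns_n\le\tfrac1{2n}$ without implicitly assuming $\delta\le1$; otherwise the arguments coincide.
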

	\begin{proof}
		Clearly $S_n(f)(0) = 0$. By \Cref{prop:xyclose} there exists $\delta \in (0,\delta_n]$ such that $|\hat{f}_{s}(x)-\hat{f}_{s}(y)| \leq (1+\frac{1}{n})\nm{x-y}$ whenever $f\in B_{\Lipo{M}}$, $x,y\in M\cap B_{n^2+\delta_n}$, $\nm{x-y} \leq \delta$ and $s \in (0,\delta]$. Set $s_n = \frac{\delta}{2nKL_n} < \delta \leq \delta_n$. By \Cref{prop:xyfar} we have $|\hat{f} _{s_n}(x)-\hat{f} _{s_n}(y)| \leq (1+\frac{1}{n})\nm{x-y}$ whenever $x,y\in M\cap B_{n^2+\delta_n}$ and $\nm{x-y}\geq \delta$.  Therefore $|\hat{f} _{s_n}(x)-\hat{f} _{s_n}(y)| \leq (1+\frac{1}{n})\nm{x-y}$ for all $x,y\in M\cap B_{n^2+\delta_n}$. It follows that $\Lip(S_n(f)) \leq 1+\frac{1}{n}$ and hence $\nm{S_n} \leq 1+\frac{1}{n}$.
		
		If $x\in M\cap B_{n^2+\delta_n}$ and $\nm z _2 \leq s_n$ then $$\nm{x-\psi(x+z)} \leq K\nm{\psi(x)-\psi(x+z)}_2 \leq KL_n\nm{z}_2 \leq KL_n s_n.$$ Therefore if $\Lip(f)\leq 1$ then $$|f(x)-\hat{f} _{s_n}(x)| \leq \lint{B_{s_n}}{}{\nu_{s_n}(z)|f(x)-f(\psi(x+z))|}{z} \leq KL_n s_n \leq \frac{1}{2n}.$$ Hence $$|S_n(f)(x)-f(x)| \leq |\hat{f} _{s_n}(x) - f(x)| + |\hat{f} _{s_n}(x_0) - f(x_0)| \leq \frac{1}{2n} + \frac{1}{2n} = \frac{1}{n},$$ for all $x\in M\cap B_{n^2+\delta_n}$.
		
		To prove the last part of the theorem, let $(f^{(k)})_k$ be a bounded sequence in $\Lipo{M}$ converging to $f$ pointwise. For a fixed $x\in M\cap B_{n^2+\delta_n}$ we get $\hat{f}^{(k)} _{s_n}(x)\to \hat{f} _{s_n}(x)$ by the Dominated Convergence Theorem. Now it is easily seen that $S_n(f_k)(x) \to S_n(f)(x)$ pointwise.
	\end{proof}
	
	The next two results stipulate how to construct of finite-rank operators that can closely approximate the mollified functions furnished by the $S_n$ in both a Lipschitz and uniform sense.
	
	\begin{thm} \label{thm:smoothingprf}
		Let $x\in M\cap B_{n^2}$ and let $E_x\colon \R^d\to T_x$ be a linear $\nm\cdot _2$-isometry. Then there exists a neighbourhood $W$ of $0\in\R^d$, such that the map $u\mapsto\psi(x+E_x u)$ from $W$ to $M$ is open and, for all $f\in \Lipo{M}$ the function $\tilde f_x\colon W \to \R$, $$\tilde f_x(u) = S_n(f)(\psi(x+E_x u)),$$ is well-defined and $C^1$. Moreover, for every $\epsilon>0$ there exists $\delta>0$ such that $\nm{D\tilde f_x(u) - D\tilde f_x(u')}_2 \leq \epsilon$ whenever $f\in B_{\Lipo{M}}$ and $u,u'\in W$ satisfy $\nm{u-u'}_2 \leq \sqrt d \delta$.
	\end{thm}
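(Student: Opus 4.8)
The plan is to unwind the definitions and push all the work onto the regularity estimates for $\hat f_s$ established in \Cref{lem:fhatiscinf}. First I would fix $x \in M \cap B_{n^2}$ and the isometry $E_x$, and consider the map $\Psi_x\colon u \mapsto \psi(x + E_x u)$. Near $u = 0$ this map is $C^1$ because $\psi$ is $C^1$ on $\ce{M}$ and $x + E_x u$ stays inside $\overline{M^{\delta_n}} \cap B_{n^2 + \delta_n} \subseteq \ce{M}$ for $u$ in a small enough ball $W_0$. For openness, I would compute $D\Psi_x(0) = D\psi(x) \circ E_x$; by \Cref{lm:dpsiproj}, $D\psi(x)$ restricted to $T_x$ is the identity, and $E_x$ maps $\R^d$ onto $T_x$, so $D\Psi_x(0)$ is a linear isomorphism of $\R^d$ onto $T_x$, which we may identify with an invertible $d \times d$ matrix. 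The Inverse Function Theorem then gives a neighbourhood $W \subseteq W_0$ of $0$ on which $\Psi_x$ is a $C^1$-diffeomorphism onto an open subset of $M$ (open in $M$, which near $x$ is the $d$-dimensional chart); in particular $\Psi_x|_W$ is an open map and $\tilde f_x = S_n(f) \circ \Psi_x$ is well-defined on $W$.

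Next I would establish that $\tilde f_x$ is $C^1$ and control the modulus of continuity of its derivative uniformly over $f \in B_{\Lipo{M}}$. Recall $S_n(f)(y) = \hat f_{s_n}(y) - \hat f_{s_n}(0)$, so $D\tilde f_x(u) = D\hat f_{s_n}(\Psi_x(u)) \circ D\Psi_x(u)$ by the chain rule; this is continuous since $\hat f_{s_n}$ is $C^\infty$ on $M^{\delta_n} \cap \inter{B_{n^2+\delta_n}}$ (\Cref{lem:fhatiscinf}) and $\Psi_x$ is $C^1$, so $\tilde f_x$ is $C^1$. For the key estimate, write
\[
D\tilde f_x(u) - D\tilde f_x(u') = \bigl(D\hat f_{s_n}(\Psi_x(u)) - D\hat f_{s_n}(\Psi_x(u'))\bigr)\circ D\Psi_x(u) + D\hat f_{s_n}(\Psi_x(u'))\circ\bigl(D\Psi_x(u) - D\Psi_x(u')\bigr).
\]
For the first term, \Cref{lem:fhatiscinf} gives $\nm{D\hat f_{s_n}(p) - D\hat f_{s_n}(q)}_2 \leq \frac{GKL_n}{s_n}\Lip(f)\nm{p-q}_2 \leq \frac{GKL_n}{s_n}\nm{p-q}_2$ for $f \in B_{\Lipo{M}}$, and since $\Psi_x$ is Lipschitz on $W$ (shrink $W$ if needed so $D\Psi_x$ is bounded there), $\nm{\Psi_x(u) - \Psi_x(u')}_2 \leq C_1\nm{u-u'}_2$; combined with the uniform bound $\nm{D\Psi_x(u)}_2 \leq C_2$ on $W$, this term is at most $\frac{GKL_n C_1 C_2}{s_n}\nm{u-u'}_2$. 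For the second term, \Cref{lem:fhatiscinf} gives $\nm{D\hat f_{s_n}(\Psi_x(u'))}_2 \leq KL_n\Lip(f) \leq KL_n$, and $u \mapsto D\Psi_x(u)$ is continuous (indeed uniformly continuous on a compact neighbourhood, since $\psi$ is $C^1$), so it has some modulus of continuity $\omega$ on $W$; this term is at most $KL_n\,\omega(\nm{u-u'}_2)$. Both bounds are independent of $f$, so given $\epsilon > 0$ I choose $\delta > 0$ small enough that $\sqrt d\,\delta \cdot \frac{GKL_n C_1 C_2}{s_n} \leq \frac{\epsilon}{2}$ and $\omega(\sqrt d\,\delta) \leq \frac{\epsilon}{2KL_n}$, which gives the claim.

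The main obstacle is the openness/diffeomorphism part: one must be careful that "open" is meant relative to $M$ (with its submanifold topology), and that the Inverse Function Theorem is being applied to $\Psi_x$ read through a chart as a map $\R^d \to \R^d$, using that $D\psi(x)|_{T_x} = \mathrm{id}$ from \Cref{lm:dpsiproj} to get invertibility of the derivative. The regularity estimate itself is then a routine chain-rule computation, the only subtlety being to keep all the constants ($C_1$, $C_2$, $\omega$, and crucially the factor $1/s_n$, which is harmless since $n$ and hence $s_n$ are fixed here) independent of the particular $f \in B_{\Lipo{M}}$ — which they are, because the $f$-dependence enters only through $\Lip(f) \leq 1$.
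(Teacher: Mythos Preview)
Your proposal is correct and follows essentially the same approach as the paper: the same chain-rule decomposition of $D\tilde f_x(u) - D\tilde f_x(u')$ into two terms, each controlled via \Cref{lem:fhatiscinf} together with the Lipschitz and uniform-continuity properties of $D\psi$, with $\delta$ chosen to make each term at most $\epsilon/2$. The only cosmetic difference is in the openness argument: the paper observes (from \Cref{lm:dpsiproj} and continuity of $D\psi$) that $\Lip_{\nm\cdot_2}((\psi-I)|_V) \leq \tfrac{1}{2}$ on a small $V \subseteq x+T_x$, so $\psi|_V$ has a Lipschitz inverse $\phi_x$ directly, rather than invoking the Inverse Function Theorem through a chart, and it tracks the constants explicitly ($L_n$ in place of your $C_1, C_2$).
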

	\begin{proof}
		By \Cref{lm:dpsiproj}, $D(\psi - I)(x)|_{T_x} = 0$. Because $\psi$ is $C^1$, there exists $V \ni x$ open in $(x+T_x) \cap M^{\delta_n} \cap \inter{B_{n^2+\delta_n}}$, such that $\Lip_{\nm\cdot _2}((\psi - I)|_V) \leq \frac{1}{2}$. Therefore the inverse of $\psi|_V$ exists and is Lipschitz, as is shown after the proof of \cite{ps:15}*{Lemma 2.7}. Let $\phi_x$ denote this inverse defined on some set $U \ni x$ that is open in $M\cap \inter{B_{n^2 +\delta_n}}$ of $x$. We put $W = E_x^{-1} (\phi_x(U)-x)$.
		
		For $f\in\Lipo{M}$ we have $$\tilde f _x(u) = \hat f  _{s_n} (\psi(x+E_x u)) - \hat f _{s_n} (0),$$ for $u\in W$. According to \Cref{lem:fhatiscinf}, $\hat f _{s_n}$ is $C^\infty$. Since $E_x$ is $C^\infty$ and $\psi$ is $C^1$ we have that $\tilde f _x$ is $C^1$. Now let $\epsilon > 0$ and by uniform continuity of $D\psi$ on compact sets choose
		\begin{align}\label{eq:delta}
		\delta \in \left(0,\frac{\epsilon s_n}{2GKL_n^3\sqrt d}\right) 
		\end{align} such that
		\begin{align}\label{eq:dpsi}
		\nm{D\psi (y) - D\psi (z)}_{op} \leq \frac{\epsilon}{2KL_n}
		\end{align}
		whenever $y,z\in M^{\delta_n}\cap \inter{B_{n^2+\delta_n}}$ satisfy $\nm{y-z}_2 \leq \sqrt d \delta$. We note that $\nm{D\psi(y)}_2\leq L_n$ when $y \in M^{\delta_n} \cap B_{n^2+\delta_n}$.
		
		Pick $f\in B_{\Lipo{M}}$ and $u,u' \in W$ satisfying $\nm{u-u'}_2\leq \sqrt d\delta$. Note that $x+E_x u, \psi(x+E_x u) \in M^{\delta_n} \cap \inter{B_{n^2+\delta_n}}$, and the same holds for $u'$. By the chain rule, $$D\tilde f _x(u) = (D\hat f _{s_n})(\psi(x+E_x u))\circ (D\psi)(x+E_x u)\circ E_x.$$ Again by \Cref{lem:fhatiscinf}, together with the fact that $E_x$ is a $\nm\cdot _2$-isometry, we obtain
		\begin{align*}
		&\nm{D\tilde f _x(u) - D\tilde f _x(u')}_2 \\ \leq& \nm{(D\hat f _{s_n})(\psi(x+E_x u)) - (D\hat f _{s_n})(\psi(x+E_x u'))}_2 \nm{(D\psi)(x+E_x u)}_2 \\ +&  \nm{(D\hat f _{s_n})(\psi(x+E_x u'))}_2 \nm{ (D\psi)(x+E_x u) - (D\psi)(x+E_x u')}_2 \\ \leq & \frac{GKL_n}{s_n} \nm{\psi(x+E_x u)-\psi(x+E_x u')}_2 L_n + KL_n \frac{\epsilon}{2KL_n} \\ \leq & \frac{GKL_n^3}{s_n} \nm{u-u'}_2 + \frac{\epsilon}{2} \leq \frac{\epsilon}{2} + \frac{\epsilon}{2} = \epsilon \tag*{by \eqref{eq:dpsi} and \eqref{eq:delta}. \qedhere}
		\end{align*}
	\end{proof}
	
	Now let $x\in M\cap B_{n^2}$ and choose some $\nm\cdot _2$-isometry $E_x\colon \R^d\to T_x$. Let $W_x$ be the neighbourhood of $0\in\R^d$ furnished by \Cref{thm:smoothingprf}. Let $C_x$ be some closed hypercube centered at $0$ and contained in $W_x$, and let $U_x = \psi(x+E_x(\inter{C_x}))$. We have that $U_x$ is open in $M$. We form the cover $\{U_x : x\in M\}$ of $M\cap B_{n^2}$ and find a finite subcover $U_1,\ldots,U_m$ with associated points $x_1,\ldots, x_m$. Let $J_n = \max\{\Lip_{\nm\cdot _2}(\phi_{x_i} |_{U_i} ): i=1, \ldots, m\}$.
	
	\begin{thm} \label{thm:finiterankop}
		Let $i\in\{1,\ldots,m\}$, and $\epsilon > 0$. There exists a finite-rank linear map $P_i \colon\Lipo{M\cap B_{n^2+\delta_n}}\to C(\overline{U_i})$ such that
		\begin{align*}
		\Lip(P_i(S_n(f))-S_n(f)|_{U_i}) &\leq KJ_n(1+\sqrt d)\epsilon \text{ and }\\ \nm{P_i(S_n(f))-S_n(f)|_{U_i}}_\infty &\leq 2KL_n\sqrt d\epsilon
		\end{align*}
		for all $f\in B_{\Lipo{M}}$. Moreover, for any bounded sequence $(f_k)_k \subseteq \Lipo{M\cap B_{n^2+\delta_n}}$ converging pointwise to $f$ we have $P_i(f_k) \to P_i(f)$ pointwise.
	\end{thm}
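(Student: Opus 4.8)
The plan is to define $P_i$ by transporting $S_n(f)$ to $\R^d$ via the chart $q\colon u\mapsto\psi(x_i+E_{x_i}u)$, replacing it there by a piecewise coordinatewise-affine interpolant over a fine grid of subcubes, and transporting the result back to $\overline{U_i}$. Abbreviate $x=x_i$, $E=E_{x_i}$, $W=W_{x_i}$, $C=C_{x_i}$, $U=U_i$. The map $q$ is the composition of the $\nm\cdot_2$-isometry $E$, a translation, and $\psi|_V$, the last of which is injective by the proof of \Cref{thm:smoothingprf}; hence $q$ is a homeomorphism of $W$ onto an open subset of $M$, and $\overline U=q(C)$. Let $\theta=q^{-1}\colon\overline U\to C$; then $\theta=E^{-1}\circ(\phi_x-x)$, and since $E^{-1}$ is a $\nm\cdot_2$-isometry and (by continuity) $\Lip_{\nm\cdot_2}(\phi_x|_{\overline U})=\Lip_{\nm\cdot_2}(\phi_x|_U)$, we get $\Lip_{\nm\cdot_2}(\theta)\le J_n$. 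I would apply \Cref{thm:smoothingprf} at $x$ with the given $\epsilon$ to obtain $\delta>0$, which we may take $\le\epsilon$. With $\ell$ the sidelength of $C$, subdivide each edge of $C$ into $\lceil\ell/\delta\rceil$ equal parts, partitioning $C$ into a regular grid of closed subcubes $C^{(1)},\dots,C^{(p)}$ of sidelength $\le\delta$, and let $\mathcal V\subseteq C$ be the finite set of all their vertices. For $g\in\Lipo{M\cap B_{n^2+\delta_n}}$ put $\tilde g=g\circ q$ on $W$ and let $\Lambda\tilde g$ be the function on $C$ equal to $\Lambda(\tilde g,C^{(k)})$ on each $C^{(k)}$; on a common face of two subcubes both interpolants reduce to the unique coordinatewise-affine function matching $\tilde g$ at the (shared) vertices of that face, so $\Lambda\tilde g$ is well defined and continuous on $C$. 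Finally set $P_i(g)=\Lambda\tilde g\circ\theta$, which lies in $C(\overline U)$.

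Linearity of $P_i$ is immediate: $g\mapsto\tilde g$ is a pre-composition, and on each $C^{(k)}$ the interpolant $\Lambda(\tilde g,C^{(k)})$ is a fixed linear combination of the values $\tilde g(v)=g(q(v))$, $v\in\mathcal V$. Thus $P_i(g)$ depends only on the finite tuple $(g(q(v)))_{v\in\mathcal V}$, so $P_i$ factors through $\R^{\mathcal V}$ and has finite rank; and since for each $y\in\overline U$ the value $P_i(g)(y)$ is a fixed linear function of that tuple, if $(f_k)_k$ is a bounded sequence in $\Lipo{M\cap B_{n^2+\delta_n}}$ with $f_k\to f$ pointwise, then $P_i(f_k)(y)\to P_i(f)(y)$ for every $y$.

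For the estimates, fix $f\in B_{\Lipo{M}}$ and put $g=S_n(f)$, so $\tilde g=\tilde f_x$ is $C^1$ and, by \Cref{thm:smoothingprf}, $\nm{D\tilde g(u)-D\tilde g(u')}_2\le\epsilon$ whenever $\nm{u-u'}_2\le\sqrt d\delta$. Hence \Cref{lm:grid} applies to $\tilde g$ on each $C^{(k)}$ (using a convex open neighbourhood of $C$ inside $W$ as the ambient domain), yielding $\Lip_{\nm\cdot_2}((\Lambda(\tilde g,C^{(k)})-\tilde g)|_{C^{(k)}})\le(1+\sqrt d)\epsilon$ and $\nm{(\Lambda(\tilde g,C^{(k)})-\tilde g)|_{C^{(k)}}}_\infty\le\sqrt d\delta\,\Lip_{\nm\cdot_2}(\tilde g)$. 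A segment between two points of the convex set $C$ splits into finitely many subsegments each lying in a single subcube, so the first bound propagates to $\Lip_{\nm\cdot_2}((\Lambda\tilde g-\tilde g)|_C)\le(1+\sqrt d)\epsilon$, and the second gives $\nm{(\Lambda\tilde g-\tilde g)|_C}_\infty\le\sqrt d\delta\,\Lip_{\nm\cdot_2}(\tilde g)$. Since $q$ is $L_n$-Lipschitz for $\nm\cdot_2$ and $\Lip_{\nm\cdot}(S_n(f))\le1+\tfrac1n$ by \Cref{thm:smoothingext}, while $\Lip_{\nm\cdot_2}(h)\le K\Lip_{\nm\cdot}(h)$ and $\Lip_{\nm\cdot}(h)\le K\Lip_{\nm\cdot_2}(h)$ for any $h$, we get $\Lip_{\nm\cdot_2}(\tilde g)\le L_nK(1+\tfrac1n)\le2KL_n$. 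Now $P_i(g)-g|_{\overline U}=(\Lambda\tilde g-\tilde g)\circ\theta$ with $\theta(\overline U)\subseteq C$ and $\Lip_{\nm\cdot_2}(\theta)\le J_n$, so combining with $\delta\le\epsilon$ we conclude $\Lip_{\nm\cdot}(P_i(S_n(f))-S_n(f)|_U)\le KJ_n(1+\sqrt d)\epsilon$ and $\nm{P_i(S_n(f))-S_n(f)|_U}_\infty\le2KL_n\sqrt d\,\epsilon$.

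The step demanding the most care is the interpolation itself: one must verify simultaneously that the grid interpolant $\Lambda\tilde g$ is single-valued and continuous on $C$ (so $P_i(g)\in C(\overline U)$), that it is a linear, finitely supported function of the vertex values $(g(q(v)))_{v\in\mathcal V}$ (so $P_i$ is linear, of finite rank, and satisfies the pointwise-convergence clause), and that the per-subcube $\nm\cdot_2$-Lipschitz estimate of \Cref{lm:grid} glues to all of $C$. The rest is bookkeeping of the constants $K$, $L_n$, $J_n$ and the factor $1+\tfrac1n$ as they pass through the chart $q$, which is exactly what \Cref{thm:smoothingprf}, \Cref{thm:smoothingext} and the lemmas of Section~\ref{sect_anc_supp} were arranged to control.
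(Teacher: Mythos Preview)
Your proposal is correct and follows essentially the same route as the paper: both transport $S_n(f)$ through the chart $u\mapsto\psi(x_i+E_{x_i}u)$, replace it by the piecewise coordinatewise-affine interpolant on a fine grid in $C_{x_i}$ (using \Cref{thm:smoothingprf} to pick $\delta\le\epsilon$), apply \Cref{lm:grid} on each subcube, and pull back via $E_{x_i}^{-1}\circ(\phi_{x_i}-x_i)$ to obtain the stated $KJ_n(1+\sqrt d)\epsilon$ and $2KL_n\sqrt d\epsilon$ bounds, with finite rank and pointwise continuity following from the dependence on finitely many vertex values. Your treatment is in places slightly more explicit than the paper's (you spell out why $\overline{U_i}=q(C)$, why the subcube Lipschitz bounds glue along segments, and you define $P_i$ directly on $\overline{U_i}$ rather than extending by continuity), but there is no substantive difference in strategy.
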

	\begin{proof}
		Fix $i\in\{1,\ldots,m\}$ and $\epsilon > 0$. Choose $\delta \in (0,\epsilon]$ furnished by \Cref{thm:smoothingprf} applied to $x_i, E_{x_i}$ and $\epsilon > 0$.
		
		Let $e(C_{x_i})$ be the edge length of $C_{x_i}$ and let $b$ be the least integer not smaller than $\frac{e(C_{x_i})}{\delta}$. Put $\xi = \frac{e(C_{x_i})}{b} \leq \delta$. Let $\mathcal{C}$ be the cover of $C_{x_i}$ with hypercubes of edge length $\xi$, determined by the mesh $\xi \mathbb{Z}^d$. In other words, $$\mathcal{C} = \{ C\subseteq \R^d : C \text{ is a hypercube, } V_C = C\cap \xi \mathbb{Z}^d \text{ and } \inter{C\cap C_{x_i}} \not=\emptyset\},$$ where $V_C$ is the set of vertices of $C$. 
		
		For any $f\in\Lipo{M\cap B_{n^2+\delta_n}}$, we define the function $G(f)\colon W_{x_i}\to \R$ by $G(f)(u) = f(\psi(x_i + E_{x_i}u))$. Then we define $\Lambda(G(f))\colon C_{x_i}\to\R$, $\Lambda(G(f))(u) = \Lambda(G(f),C)(u)$, where $C\in\mathcal{C}$ is such that $u\in C$. Note that if $C,C'\in \mathcal{C}$ are different hypercubes with $C\cap C'\not=\emptyset$, then $\Lambda(G(f),C)$ and $\Lambda(G(f),C')$ are equal on $C\cap C'$. Therefore $\Lambda(G(f))$ is well-defined. Finally, we define $$P_i(f)(y) = \Lambda(G(f))(E_{x_i}^{-1}(\phi_{x_i}(y)-x_i))$$ for $y\in U_i$. We may and do extend $P_i(f)$ to $\overline{U_i}$, since $P_i(f)$ is Lipschitz. We have that $P_i$ is a finite-rank linear map.
		
		Now fix $f\in B_{\Lipo{M}}$ and consider the function $g = G(S_n(f))$. By the choice of $\delta$ supplied by \Cref{thm:smoothingprf}, the function $g$ satisfies the assumptions of \Cref{lm:grid} for every $C\in\mathcal{C}$. We obtain $$\Lip_{\nm\cdot _2}((\Lambda(g) - g)|_{C_{x_i}}) \leq (1+\sqrt d)\epsilon \quad\text{and}\quad \nm{(\Lambda(g ) - g)|_{C_{x_i}} }_\infty \leq \sqrt d\delta\Lip_{\nm\cdot _2}(g).$$
		
		Pick $y,z\in U_i$. We have $$P_i(S_n(f))(y) = \Lambda(g)(E_{x_i}^{-1}(\phi_{x_i}(y)-x_i)),$$ and $$S_n(f)(y) = G(S_n(f))(E_{x_i}^{-1}(\phi_{x_i}(y)-x_i)) = g(E_{x_i}^{-1}(\phi_{x_i}(y)-x_i))$$ (and similarly for $z$). Therefore
		\begin{align*}
		&|P_i(S_n(f))(y) - S_n(f)(y) - (P_i(S_n(f))(z) - S_n(f)(z))| \\=& |(\Lambda(g) - g) (E_{x_i}^{-1}(\phi_{x_i}(y)-x_i)) - ((\Lambda(g) - g) (E_{x_i}^{-1}(\phi_{x_i}(z)-x_i)))| \\ \leq& \Lip_{\nm\cdot _2}((\Lambda(g) - g)|_{C_{x_i}}) \nm{(E_{x_i}^{-1}(\phi_{x_i}(y)-x_i)) - (E_{x_i}^{-1}(\phi_{x_i}(z)-x_i))}_2 \\ \leq& (1+\sqrt d)\epsilon \nm{\phi_{x_i} (y) - \phi_{x_i} (z)}_2 \leq KJ_n(1+\sqrt d)\epsilon \nm{y-z}.
		\end{align*}
		Therefore $\Lip(P_i(S_n(f)) - S_n(f)|_{U_i}) \leq KJ_n(1+\sqrt d)\epsilon$.
		
		For the uniform norm, we have $$\nm{P_i(S_n(f))-S_n(f)|_{U_i}}_\infty \leq \nm{(\Lambda(g ) - g)|_{C_{x_i}} }_\infty \leq \sqrt d \delta\Lip_{\nm\cdot _2}(g).$$ If $u,v\in C_{x_i}$ then 
		\begin{align*}
		|g(u) - g(v)| &= |S_n(f)(\psi(x_i + E_{x_i}u)) -  S_n(f)(\psi(x_i + E_{x_i}v))| \\&\leq \Lip(S_n(f))\nm{\psi(x_i + E_{x_i}u) - \psi(x_i + E_{x_i}v)} \\&\leq \left(1+\frac{1}{n}\right) K\nm{\psi(x_i + E_{x_i}u) - \psi(x_i + E_{x_i}v)}_2 \leq 2 KL_n\nm{u-v}_2.
		\end{align*}
		Therefore $\Lip_{\nm\cdot _2}(g) \leq 2KL_n$ and $$\nm{P_i(S_n(f))-S_n(f)|_{U_i}}_\infty \leq 2KL_n \sqrt d \delta \leq 2KL_n\sqrt d\epsilon.$$
		
		Finally, let $(f_k)_k$ be a bounded sequence in $\Lipo{M\cap B_{n^2+\delta_n}}$ converging pointwise to $f$ and let $y\in U_i$. We have that $G(f_k) \to G(f)$ pointwise. Let $C\in \mathcal{C}$ be a hypercube containing the point $u = E_{x_i}^{-1}(\phi_{x_i}(y)-x_i)$. Then $P_i(f_k)(y) = \Lambda(G(f_k))(u) = \Lambda(G(f_k), C)(u) \to \Lambda(G(f), C)(u) = P_i(f)(y)$. As $y\in U_i$ was arbitrary we have that $P_i(f_k) \to P_i(f)$ pointwise on $U_i$, and convergence on $\overline{U_i}$ follows easily.
	\end{proof}
	
	Finally we have the ingredients needed to prove \Cref{thm_main}.
	
	\begin{proof}[Proof of \Cref{thm_main}]
		As described in \cite{ps:15}*{p.~44}, it suffices to construct a sequence $(\Gamma_n)_n$ of finite-rank dual operators on $\Lipo{M}$ such that $\nm{\Gamma_n} \to 1$ and for every $x\in M$, $\Gamma_n(f)(x) \to f(x)$ uniformly in $f\in B_{\Lipo{M}}$.
		
		For a fixed $n\in \N$, consider the cover $U_1,\ldots,U_m$ of $M\cap B_{n^2}$ defined before \Cref{thm:finiterankop} and let $\alpha_i\colon M\to [0,1], i=1,\ldots,m$ be $H$-Lipschitz functions, where $H\geq 1$, forming a partition of unity subordinate to $U_1,\ldots,U_m$. Apply \Cref{thm:finiterankop} to $\epsilon := (2mnHK(1+\sqrt d)\max(L_n,J_n))^{-1}$ to obtain corresponding operators $P_i, i=1,\ldots,m$. Then $$\nm{P_i(S_n(f))-S_n(f)|_{U_i}}_\infty, \Lip(P_i(S_n(f))-S_n(f)|_{U_i}) \leq \frac{1}{mnH}$$ for all $f\in B_{\Lipo{M}}$. Define the operator $Q_n'\colon\Lipo{M}\to C(M\cap B_{n^2})$ by $$Q_n'(f)(x) = \sum_{i=1}^m \alpha_i(x) P_i(S_n(f))(x).$$ By \Cref{thm:finiterankop}, $Q_n'$ has finite rank. Then using \Cref{lm:gluing} we obtain $$\nm{Q_n'(f) - S_n(f)|_{M\cap B_{n^2}}}_\infty \leq \frac{1}{n} \text{ and } \Lip(Q_n'(f) - S_n(f)|_{M\cap B_{n^2}}) < \frac{1+mH}{mnH} \leq \frac{2}{n},$$ for all $f\in B_{\Lipo{M}}$. According to \Cref{thm:smoothingext} we have $\nm{Q_n'(f) - f|_{M \cap B_{n^2}}}_\infty < \frac{2}{n}$ and  $\Lip(Q_n'(f)) \leq \frac{2}{n}+\Lip(S_n(f)) \leq 1+\frac{3}{n}$ for all $f\in B_{\Lipo{M}}$. Define $Q_n\colon\Lipo{M}\to \Lipo{M\cap B_{n^2}},$ $$Q_n(f)(x) = Q_n'(f)(x)-Q_n'(f)(0).$$ Then $Q_n(f)(0) = 0$ for all $f\in\Lipo{M}$. Since for $f\in B_{\Lipo{M}}$, $|Q_n'(f)(0)| = |Q_n'(f)(0) - f(0)| \leq \frac{2}{n}$, we have $\nm{Q_n(f) - f|_{M \cap B_{n^2}}}_\infty < \frac{4}{n}$. Moreover, $\Lip(Q_n(f)) = \Lip(Q_n'(f)) \leq 1+\frac{3}{n}$.
		
		Finally, by \Cref{prop:flat}, there exists an operator $\Phi_n\colon\Lipo{M\cap B_{n^2}} \to \Lipo{M}$, such that $\nm {\Phi_n} \leq 1+\frac{K^2}{\log n}$ and $\Phi_n(f)$ agrees with $f$ on $M\cap B_n$ and equals $0$ outside $B_{n^2}$ for all $f\in\Lipo{M}$. Define $\Gamma_n\colon \Lipo{M}\to\Lipo{M}$ by $\Gamma_n(f) = \Phi_n(Q_n(f)).$ We have $$\nm{\Gamma_n}\leq \left(1+\frac{K^2}{\log n}\right)\left( 1+\frac{3}{n} \right) \to 1 \text{ as } n\to\infty.$$ Moreover, for a fixed $x\in M$ and all $n\geq \nm x _2$ we have $$|\Gamma_n(f)(x)-f(x)| = |Q_n(f)(x)-f(x)| \leq \frac{4}{n},$$ for all $f\in B_{\Lipo{M}}$. 
		
		To show that $\Gamma_n$ is a dual operator it suffices to show that it is $w^*$-$w^*$ continuous. By the Banach-Dieudonn\'e Theorem it suffices to show that the restriction of $\Gamma_n$ to $B_{\Lipo{M}}$ is $w^*$-$w^*$ continuous. Since $M\subseteq\R^N$, $\free{M}$ is separable, and so $B_{\Lipo{M}}$ is $w^*$-metrisable. It is therefore enough to check sequential continuity of $\Gamma_n$. The $w^*$-topology on $\Lipo{M}$ on bounded sets coincides with the topology of pointwise convergence, and so let $(f^{(k)})_k$ be a sequence in $B_{\Lipo{M}}$ converging pointwise to $f$. Then $(S_n(f^{(k)}))_k$ is a bounded sequence converging pointwise to $S_n(f)$ by \Cref{thm:smoothingext}. By \Cref{thm:finiterankop}, $P_i(S_n(f^{(k)})) \to P_i(S_n(f))$ pointwise for every $i=1,\ldots,m$. Then clearly $Q_n'(f^{(k)}) \to Q_n'(f)$ pointwise which easily implies that $Q_n(f^{(k)})\to Q_n(f)$ and $\Gamma_n(f^{(k)})\to \Gamma_n(f)$.
	\end{proof}
	
	\section{Open problems}\label{sect_open_problems}
	
	We conclude with a few remarks about open problems. We don't know if Theorem \ref{thm_main} can be generalised to include non-$C^1$-submanifolds. In particular, we don't know the answer in the case where $M$ is the unit sphere of an arbitrary norm on $\R^N$, $N \geq 3$.  We can answer this in the case $N=2$ by appealing to existing results.
	
	Let $\nm\cdot, \nmt\cdot$ be two norms on $\R^2$ (neither necessarily differentiable), let $M$ be the unit sphere of $\nm\cdot$ endowed with the metric $d$ induced by$\nmt\cdot$ and choose a distinguished point $x_0\in M$. We will denote by $\partial D$ the boundary of a subset $D\subseteq\R^2$.
	
	\begin{thm}
		The space $\free{M}$ has the MAP.
	\end{thm}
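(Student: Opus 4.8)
The plan is to exploit the fact that $(M,d)$ is a convex Jordan curve carrying the ambient metric $\nmt{x-y}$, and to combine the smooth case of our main result with known results about simpler pieces. As a preliminary remark, $M$ is bi-Lipschitz to the Euclidean unit circle via the radial homeomorphism $x\mapsto x/\nm{x}_2$ and its inverse (both Lipschitz, since $M$ and the circle are compact and bounded away from $0$), so $\free M$ is isomorphic to $\free{S^1}$; in particular $\free M$ has the BAP because $M\subseteq\R^2$ is doubling. The whole difficulty is to upgrade this to the constant $1$, which bi-Lipschitz equivalence does \emph{not} transfer. Also, when $\nm\cdot$ is $C^1$ the sphere $M$ is a compact $C^1$-submanifold of $(\R^2,\nmt\cdot)$ and $\free M$ has the MAP by \Cref{thm_main}; so the substance is the case in which $S_{\nm\cdot}$ has corners.

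First I would cut $M$ at two points $p,q$ into two closed arcs $A,B$ with $A\cup B=M$ and $A\cap B=\{p,q\}$, chosen so that each of $A,B$ is in ``good position'' — concretely, so that each is a convex arc whose corners are all shallow (this is possible because a convex curve has, for any threshold, only finitely many corners with exterior angle above it). For such arcs one should have that $\free A$ and $\free B$ have the MAP: on the smooth portions and at shallow corners the almost-retraction criterion of \Cref{thm_ps} is available (unlike at sharp corners), or alternatively one invokes a direct result on free spaces over simple rectifiable plane curves. Next, let $\iota_A\colon\free A\to\free M$ be the map induced by $A\hookrightarrow M$; it is an isometric embedding by McShane's extension theorem \cite{weaver}*{Theorem 1.33}, and likewise for $\iota_B$. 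Consider the norm-one surjection $\Theta\colon\free A\oplus_1\free B\to\free M$, $(\mu,\nu)\mapsto\iota_A\mu+\iota_B\nu$. One checks it is onto (writing a molecule $\delta_x-\delta_y$ with $x\in A$, $y\in B$ as $(\delta_x-\delta_p)+(\delta_p-\delta_y)$, and using that interior points of $A$ and $B$ that are close in $M$ must lie near $p$ or $q$), and that its kernel is one-dimensional: $\ker\Theta=\{(\mu,\nu):\iota_A\mu=-\iota_B\nu\}$, and $\iota_A(\free A)\cap\iota_B(\free B)$ equals $\free{A\cap B}=\free{\{p,q\}}=\R(\delta_p-\delta_q)$, so $\ker\Theta$ is the line spanned by one vector $z$. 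Thus $\free M$ is a quotient of $\free A\oplus_1\free B$ by a one-dimensional subspace, which is automatically $1$-complemented by Hahn--Banach.

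Finally I would use the elementary fact that a quotient of an MAP space by a one-dimensional ($1$-complemented) subspace again has the MAP: if $(T_\alpha)$ witnesses the MAP of $\free A\oplus_1\free B$ (finite-rank, $\nm{T_\alpha}\le1$, $T_\alpha\to I$ pointwise), then the rank-one perturbations $\tilde T_\alpha=T_\alpha+(z-T_\alpha z)\otimes\phi$ (with $\phi(z)=1=\nm\phi$) fix $z$, descend to finite-rank operators on $\free M$ of norm $\le 1+\nm{z-T_\alpha z}\to1$, and still converge pointwise to the identity. Since $\free A\oplus_1\free B$ has the MAP (direct $\ell^1$-sums preserve it), this gives the MAP for $\free M$.

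The step I expect to be the main obstacle is controlling the metric, not the algebra: $M$ carries the \emph{ambient} metric, so across any point $p$ the strict inequality $\nmt{x-y}<\nmt{x-p}+\nmt{p-y}$ holds, which is why one cannot split $M$ at a single (corner) point and get $\free M\cong\free A\oplus_1\free B$, and also why $\Theta$ above is only a linear quotient and not an isometric one. Consequently the two delicate points are (i) the lemma that $\iota_A(\free A)\cap\iota_B(\free B)=\free{\{p,q\}}$ inside $\free M$ — a genuine statement about how the arcs sit in $M$, needing an argument rather than a formal gluing identity — and (ii) that the chosen arcs $A,B$ do have free spaces with the MAP; it is precisely at these two points that the proof relies on (or adapts) existing results.
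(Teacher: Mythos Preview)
Your argument has a genuine gap at the final step, and it is exactly the metric obstruction you flag in your last paragraph but then do not resolve. The map $\Theta\colon\free A\oplus_1\free B\to\free M$ is a norm-one linear surjection with one-dimensional kernel, so it induces a bijection $\bar\Theta\colon(\free A\oplus_1\free B)/\ker\Theta\to\free M$; but, as you yourself note, $\bar\Theta$ is \emph{not} an isometry, because for $x\in A\setminus B$ and $y\in B\setminus A$ the quotient norm of the class of $(\delta_x-\delta_p,\,\delta_p-\delta_y)$ is $\min\bigl(d(x,p)+d(p,y),\,d(x,q)+d(q,y)\bigr)$, which strictly exceeds $d(x,y)=\nm{\delta_x-\delta_y}_{\free M}$. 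Your ``elementary fact'' correctly yields finite-rank operators of norm tending to $1$ on $(\free A\oplus_1\free B)/\ker\Theta$ with its quotient norm, but transporting them to $\free M$ multiplies their norms by $\nm{\bar\Theta^{-1}}>1$, a constant that does not improve along the net. So the construction delivers only the $\nm{\bar\Theta^{-1}}$-BAP for $\free M$, not the MAP. No choice of the cut points $p,q$ repairs this: the strict triangle inequality across \emph{any} interior point of a straight-metric curve forces $\nm{\bar\Theta^{-1}}>1$.

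A second, independent gap is step (ii): you assert that $\free A$ and $\free B$ have the MAP when the arcs have only ``shallow'' corners, invoking either \Cref{thm_ps} or an unspecified result on rectifiable plane curves. Neither is available. The almost-retraction hypothesis of \Cref{thm_ps} depends on the ambient norm $\nmt\cdot$, not just on the shape of the arc, and the paper's own example in Section~\ref{sect_intro} shows it can fail even for smooth manifolds; in $\R^2$ you would still need an argument, and none is given. No general MAP result for free spaces over Lipschitz arcs in $(\R^2,\nmt\cdot)$ is in the cited literature.

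For contrast, the paper's proof is short and avoids both difficulties by passing to a set with nonempty interior. One takes the annulus $D=2B\setminus\inter{B}$ (where $B$ is the $\nm\cdot$-ball) and uses a classical result \cite{b:73}*{Theorem 5} giving a $\nmt\cdot$-contractive retraction of $\R^2$ onto $B$; its restriction to $D$ is a $1$-Lipschitz map onto $M$, so $\free M$ is $1$-complemented in $\free D$. The set $D$ is locally downwards closed, hence $\free D$ has the MAP by \cite{ps:15}*{Corollary 2.4}, and the MAP passes to the $1$-complemented subspace $\free M$.
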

	
	\begin{proof}
		Denote by $B$ be closed unit ball of $\nm\cdot$. Let $D = 2B \setminus \inter{B}$, where $2B := \{2x : x\in B\}$. Equip $D$ with the metric $d$ and let $x_0$ be the distinguished point for $D$ as well. By \cite{b:73}*{Theorem 5}, there exists a map $h\colon \R^2 \to B$, such that $h(x)=x$ for all $x\in B$ and $h$ is contractive, i.e.~$d(h(x),h(y)) \leq d(x,y)$ for all $x,y\in \R^2$. Let $h|_D$ be the restriction of $h$ to $D$ and note the the range of $h|_D$ is $M$. By the lifting property for Lipschitz maps, $h|_D$ induces a norm-one projection $H\colon \free{D}\to\free{M}$. Therefore it suffices to show that $\free{D}$ has the MAP. By \cite{ps:15}*{Corollary 2.4} it suffices to show that $D$ is locally downwards closed, i.e.~given $x\in\partial D$, there exists open $U\ni x$ and $v\not= 0$ such that $y-tv\in\inter{D}$ whenever $y\in U\cap D, y-tv\in U$ and $t>0$. This is straightforward to see: given $x\in\partial D$, set $v=x$ if $x\in\partial (2B)$ and $v=-x$ if $x\in\partial B$, and let $U$ be a sufficiently small open ball having centre $x$.
	\end{proof}
	
	
	
	\bibliography{manifolds_map}
	
\end{document}